\theoremstyle{definition}
\newtheorem{theorem}{Theorem}[section]
\newtheorem*{theorem*}{Theorem}
\newtheorem{prop}[theorem]{Proposition}
\newtheorem{lemma}[theorem]{Lemma}
\newtheorem*{lemma*}{Lemma}
\newtheorem*{acknowledgment}{Acknowledgment}
\theoremstyle{definition}
\newtheorem{example}[theorem]{Example}
\theoremstyle{remark}
\newtheorem{remark}[theorem]{Remark}
\newcommand\blfootnote[1]{%
	\begingroup
	\renewcommand\thefootnote{}\footnote{#1}%
	\addtocounter{footnote}{-1}%
	\endgroup
}
\title{An inverse and a stability result for Ruzsa's inequality on triple sumsets}
\author{Swaroop Hegde}
\begin{document}

\begin{abstract}
	Ruzsa's inequality states that \(|A+A+A| \leq |A+A|^{3/2}\) for any finite set \(A\) in a commutative group. Ruzsa has constructed examples showing that this inequality is sharp asymptotically, up to a constant factor. We prove an inverse result which says that if \(|A+A+A| \geq \frac{1}{M} |A+A|^{3/2}\) for some parameter \(M,\) then the set \(A\) resembles the sets in Ruzsa's construction. We then construct more families of examples which suggest that our inverse result is likely best possible qualitatively. The method extends to give an inverse result for a higher sumset analogue of Ruzsa's inequality, namely \(|(h+1)A| \leq |hA|^{\frac{h+1}{h}}\) for any \(h\geq 2.\) We also provide a ``99\%-stability" version of Ruzsa's inequality, which describes near optimal structures when \(M\) is very close to \(1.\)
\end{abstract}

\maketitle \blfootnote{Department of Mathematics, University of Georgia, Athens, Georgia 30602, United States. Email: \texttt{swaroop.hegde@uga.edu}. This material is based upon work supported by the National Science Foundation under Grant No. 2054214.} 

\section{Introduction}

If \(A\) is a subset of a commutative group then its sumset is defined as follows:
\[A+A = \{a+b  : a, b \in A\}.\]

If \(A\) is finite and non-empty, then size of \(A+A\) can be trivially bounded by \begin{equation} \label{eq:A+A_trivial}
	|A| \leq |A+A| \leq \binom{|A|+1}{2}.
\end{equation}

The lower bound in Equation \eqref{eq:A+A_trivial} is attained when for instance \(A\) is a coset of a subgroup and the upper bound is attained when all the sums of \(A\) are distinct, such as in a geometric progression inside  \(\mathbb Z\) or when \(A\) is the set of generators of a free group. The quantity \[K = \frac{|A+A|}{|A|}\] called the doubling ratio is generally used to measure how big \(|A+A|\) is compared to \(|A|.\) The general philosopy in additive combinatorics is that when the doubling ratio is small, then the set \(A\) has high additive structure and resembles a coset of a subgroup. This is, for example, the essence of Freiman's theorem \cite{freiman_foundations} and there have been recent breakthroughs in the quantitative aspects of this theorem \cite{Marton_GGMF}. Such results which give information about the additve structure of a set just by knowing something about the size of its sumset fall under the bigger umbrella of inverse theorems in combinatorics. 
 Classical questions in additive combinatorics also concern cardinality estimates of iterated sumsets of a set \(A\) given the size of \(|A+A|.\) (The iterated sumset \(hA\) for \(h\geq 2\) consists of all the \(h\)-fold sums of \(A\).) For instance, Pl\"unnecke's inequality \cite{Plunnecke_og, ruzsa_graph}, a standard tool in additive combinatorics, says that if \(|A+A| = K|A|\) then for \(h\geq 3\) \begin{equation}\label{eq:Plunnecke}
	|hA|\leq K^{h}|A|.
\end{equation} So if \(K\) is not large, then the sizes of iterated sumsets do not change by very much, like in cosets.

Note however, that if \(K\geq |A|^{1-\frac 1 h}\) then Pl\"unnecke's inequality is worse than the trivial bound \(|hA| \leq |A|^h\). Ruzsa investigated this and first proved a bound  in the case \(h=3\) which complements Pl\"unnecke's inequality when \(K\) is large (see Theorem 1.9.1 of \cite{ruzsa_sumsets_and_structure}), namely \begin{equation} \label{eq:Ruzsa_ineq_original}
	|A+A+A| \leq |A+A|^{3/2} = K^{3/2}|A|^{3/2}.
\end{equation}

We will be refering to Equation \eqref{eq:Ruzsa_ineq_original} as Ruzsa's inequality in this paper. Pl\"{u}nnecke's inequality gives the better estimate when $K \leq |A|^{1/3}$ and Ruzsa's inequality is better when $|A|^{1/3} \leq K \leq |A|.$ Ruzsa's inequality is asymptotically sharp up to a constant factor as is shown by the following example.

\begin{example}[Additively dissociated sets] \label{ex:dissociated_set} Let us call a subset \(A\) of a commutative group \textit{\(h\)-dissociated} for some integer \(h\geq 2\) if all the \(h\)-fold sums of \(A\) are distinct. (Note that if a set is \(h\)-dissociated, then it is also \((h-1)\)-dissociated.) For example, $A \subset \mathbb Z$ given by the geometric progression $A = \{ 1, 3, 3^2, \ldots, 3^{n-1} \}$ is \(3\)-dissociated. We have $|A| = n$ with $$ |A+A| = \binom{n+1}{2} \qquad \text{and} \qquad |A+A+A| = \binom{n+2}{3}. $$
	
	Thus we have \( |A+A+A| = \Omega( |A+A|^{3/2})\) and \(A\) serves as an asymptotically sharp example for Ruzsa's inequality.
\end{example}

 	However, in this case the doubling ratio is \( K= \frac{|A+A|}{|A|} = \frac{|A|+1}{2},\) which is the largest possible, whereas Ruzsa's inequality is better than Pl\"unnecke's inequality for a wider range, namely \(|A|^{1/3} \leq K \leq |A|.\) Ruzsa has constructed the following family of examples (Theorem 1.9.5 of \cite{ruzsa_sumsets_and_structure}) showing that Ruzsa's inequality is sharp asymptotically, upto a constant factor, for any possible doubling in the range $|A|^{1/3} \leq K \leq |A|.$ 
 
  \begin{example} [Ruzsa's construction] \label{ex:ruzsa_construction}
 Let $m \geq 1$ be an integer and $K \in \mathbb R$ be such that $m \leq K \leq m^3.$ We construct $ A \subset \mathbb{Z}^3$ as the union of two sets $A = X \cup Y,$ where
 
 $$X = \{ (x, y, z) \in \mathbb Z^3 \ | \ 0\leq x, y, z < m \} \ \text{and}$$ $$ Y = \{ (x, 0, 0), (0, x, 0), (0, 0, x) \in \mathbb Z^3 \ | \ 0\leq x < \sqrt{Km^{3}} \}.$$

 Visually, $A$ consists of a cube in $\mathbb Z^3$ cornered at the origin with sides of length $m,$ together with three splines, each of length $\sqrt{Km^{3}},$ attached to it along the coordinate axes.
 
 \begin{center}

\begin{tabular}{p{0.25\textwidth} p{0.3\textwidth} p{0.2\textwidth}}
      \tdplotsetmaincoords{70}{35} 
\begin{tikzpicture}[tdplot_main_coords, thick]

  \def\L{.6} 

  \coordinate (O) at (0,0,0);
  \coordinate (A) at (\L,0,0);
  \coordinate (B) at (\L,\L,0);
  \coordinate (C) at (0,\L,0);
  \coordinate (D) at (0,0,\L);
  \coordinate (E) at (\L,0,\L);
  \coordinate (F) at (\L,\L,\L);
  \coordinate (G) at (0,\L,\L);

  \fill[pattern=dots, pattern color=gray] (O) -- (A) -- (B) -- (C) -- cycle; 
  \fill[pattern=dots, pattern color=gray] (A) -- (B) -- (F) -- (E) -- cycle; 
  \fill[pattern=dots, pattern color=gray] (C) -- (B) -- (F) -- (G) -- cycle; 
  \fill[pattern=dots, pattern color=gray] (O) -- (C) -- (G) -- (D) -- cycle; 
  \fill[pattern=dots, pattern color=gray] (O) -- (A) -- (E) -- (D) -- cycle; 
  \fill[pattern=dots, pattern color=gray] (D) -- (E) -- (F) -- (G) -- cycle; 

  \draw[thick] (O) -- (A) -- (B) -- (C) -- cycle;  
  \draw[thick] (D) -- (E) -- (F) -- (G) -- cycle;  
  \draw[thick] (O) -- (D);
  \draw[thick] (A) -- (E);
  \draw[thick] (B) -- (F);
  \draw[thick] (C) -- (G);

  \draw[-, thick, Blue] (0,0,0) -- (2,0,0) node[anchor=north east]{{\tiny\color{darkgray}$x$}};
  \draw[-, thick, Blue] (0,0,0) -- (0,2,0) node[anchor=north west]{{\tiny\color{darkgray}$y$}};
  \draw[-, thick, Blue] (0,0,0) -- (0,0,2) node[anchor=south]{{\tiny\color{darkgray}$z$}};

 \node at (-0.1, -0.7, 0) {{ $A$}};
    
\end{tikzpicture}& \tdplotsetmaincoords{70}{35} 
\begin{tikzpicture}[tdplot_main_coords, thick]

  \draw[-, thick, Blue] (0,0,0) -- (2,0,0) node[anchor=north east]{{\tiny\color{darkgray}$x$}};
  \draw[-, thick, Blue] (0,0,0) -- (0,2,0) node[anchor=north west]{{\tiny\color{darkgray}$y$}};
  \draw[-, thick, Blue] (0,0,0) -- (0,0,2) node[anchor=south]{{\tiny\color{darkgray}$z$}};
    \node at (0, -1.3, 1.5) {{\tiny \color{purple}{$\sqrt{Km^{3}}$}}};
  \node at (-0.1, -0.7, 0) {{ $Y$}};

\end{tikzpicture} &  \tdplotsetmaincoords{60}{120} 
\begin{tikzpicture}[tdplot_main_coords, line join=round]

  \def\L{.6} 

  \coordinate (O) at (0,0,0);
  \coordinate (A) at (\L,0,0);
  \coordinate (B) at (\L,\L,0);
  \coordinate (C) at (0,\L,0);
  \coordinate (D) at (0,0,\L);
  \coordinate (E) at (\L,0,\L);
  \coordinate (F) at (\L,\L,\L);
  \coordinate (G) at (0,\L,\L);

  \fill[pattern=dots, pattern color=gray] (O) -- (A) -- (B) -- (C) -- cycle; 
  \fill[pattern=dots, pattern color=gray] (A) -- (B) -- (F) -- (E) -- cycle; 
  \fill[pattern=dots, pattern color=gray] (C) -- (B) -- (F) -- (G) -- cycle; 
  \fill[pattern=dots, pattern color=gray] (O) -- (C) -- (G) -- (D) -- cycle; 
  \fill[pattern=dots, pattern color=gray] (O) -- (A) -- (E) -- (D) -- cycle; 
  \fill[pattern=dots, pattern color=gray] (D) -- (E) -- (F) -- (G) -- cycle; 

  \draw[thick] (O) -- (A) -- (B) -- (C) -- cycle;  
  \draw[thick] (D) -- (E) -- (F) -- (G) -- cycle;  
  \draw[thick] (O) -- (D);
  \draw[thick] (A) -- (E);
  \draw[thick] (B) -- (F);
  \draw[thick] (C) -- (G);

    \node[purple] at (0.5, 1.1, 0.7) {{\tiny $m$}};

    \node[black] at (1.8, 1.4, 0.1) {$X$};
\end{tikzpicture}

  \end{tabular}
  
\end{center}
 
 Then it can be checked that \(|A| = \Theta(m^3)\), \(|A+A| = \Theta(K|A|)\) and \(|A+A+A| = \Theta (K^{3/2}|A|^{3/2}).\) (We omit the details here but similar calculations can be found in Examples \ref{ex:gap}, \ref{ex:random} and \ref{ex:ruzsa_higher_sums}.)
\end{example}

The main result of this paper is an inverse result showing that if \(|A+A+A| \geq \frac{1}{M} K^{3/2}|A|^{3/2},\) for some parameter \(M,\) then \(A\) is similar to the sets in Ruzsa's construction. More specifically, we show the existence of a subset \(Y\subset A\) of size \(|Y| = \Theta_M(\sqrt{K|A|})\) such that \(|Y+Y+Y| = \Theta_M(|Y|^3)\) (with the dependence on \(M\) being polynomial in \(M\)), which is analogous to the splines in Ruzsa's construction.
\begin{theorem}[Inverse result] \label{thm:inverse_theorem_2}
	Let $A$ be a finite non-empty subset of a commutative group with $|A+A| \leq K|A|.$ Suppose $ |A+A+A| \geq \frac{1}{M}K^{3/2}|A|^{3/2},$ where $1\leq  M \leq \frac{1}{2}\sqrt{\frac{|A|}{K}}.$ Then there exists $ Y \subset A$ such that \begin{enumerate}
		\item $ \displaystyle \frac{\sqrt{K|A|}}{(2M)^{1/3}} \leq |Y| < 2M{\sqrt{K|A|}}$ and 
		\item $ \displaystyle |Y+Y+Y| \geq \frac{|Y|^3}{(2M)^4}.$
	\end{enumerate}
\end{theorem}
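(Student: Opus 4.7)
Set $N := \sqrt{K|A|}$, so the hypothesis reads $|A+A+A| \geq N^3/M$ and the target is a subset $Y \subset A$ with $|Y| = \Theta(N)$ (up to polynomial factors in $M$) and $|Y+Y+Y|$ nearly $|Y|^3$, mimicking the splines in Example \ref{ex:ruzsa_construction}. My plan is to locate such a $Y$ by analysing the representation function
\[
r(s) \;=\; |\{(a,b,c) \in A^3 : a+b+c = s\}|.
\]
Since $\sum_s r(s) = |A|^3$ and $|A+A+A| \geq N^3/M$, the average of $r$ over $A+A+A$ is at most $M(|A|/K)^{3/2}$, so by Markov most elements of $A+A+A$ are \emph{light} in the sense that $r(s) = O\bigl(M(|A|/K)^{3/2}\bigr)$.

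The key combinatorial identity
\[
|Y|^3 \;=\; \sum_{s} r_{3Y}(s) \;\leq\; |Y+Y+Y| \cdot \max_s r_{3Y}(s)
\]
converts any uniform bound $\max_s r_{3Y}(s) \leq B$ into $|Y+Y+Y| \geq |Y|^3/B$, so the problem reduces to building a sizeable $Y$ on which $r_{3Y}$ remains bounded by a polynomial in $M$. I would try to construct $Y$ iteratively, at each step adding an element $a \in A$ that minimizes a potential function measuring $\max_s r_{3(Y \cup \{a\})}(s)$, and stopping once $|Y|$ reaches $2M\sqrt{K|A|}$, which directly gives the stated upper bound. The abundance of light sums is meant to guarantee that the process cannot halt prematurely before $|Y|$ reaches $\sqrt{K|A|}/(2M)^{1/3}$: a small $Y$ on which $r_{3Y}$ is uniformly small would leave too many light sums of $A+A+A$ uncovered, and reintroducing them through representations outside $Y^3$ forces a contradiction with the hypothesis $|A+A+A| \geq N^3/M$.

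The hard part will be the quantitative bookkeeping needed to match the precise constants $(2M)^{1/3}$ and $(2M)^4$ in Theorem \ref{thm:inverse_theorem_2}: tracking how $\max_s r_{3Y}(s)$ grows as $Y$ grows requires a subtle interplay between the lightness threshold and the potential function, and a clean analysis likely requires dyadic pigeonholing on the level sets of $r$ rather than the single Markov step above. A secondary obstacle is handling general commutative groups (with possible $2$- or $3$-torsion), since Ruzsa's original derivation of $|3A| \le |2A|^{3/2}$ typically recovers $a,b,c$ from the pairwise sums $(a+b,b+c,a+c)$ via a division-by-$2$ step; the inverse argument should sidestep this by staying at the level of representation counts as outlined above.
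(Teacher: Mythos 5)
Your approach is genuinely different from the paper's, and in its current form it has a serious gap. The paper's proof is a short two-step argument: apply Pl\"unnecke's inequality for a large subset (Theorem \ref{thm:Plunnecke_for_large_hv2} with $h=2$) to get $X \subseteq A$ with $|X| \geq (1-\delta)|A|$ and $|X+A+A| \leq \frac{K^2}{\delta}|A|$; set $Y = A \setminus X$; then use $A+A+A \subseteq (X+A+A) \cup (Y+Y+Y)$ to conclude $|Y+Y+Y| \geq |A+A+A| - \frac{K^2}{\delta}|A|$, and optimize $\delta = 2M\sqrt{K/|A|}$. The length bounds on $Y$ come for free from $|Y| \leq \delta|A|$ and the trivial bound $|Y+Y+Y| \leq |Y|^3$. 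This is a direct sumset argument: it produces a large $Y+Y+Y$ outright, without ever controlling a pointwise representation count.

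Your proposal instead tries to build $Y$ greedily so that $\max_s r_{3Y}(s)$ is bounded, and feed that into $|Y|^3 \leq |Y+Y+Y| \cdot \max_s r_{3Y}(s)$. There are two problems. First, the construction is never carried out: the potential function is not defined, the analysis that $\max_s r_{3Y}(s)$ stays bounded as $Y$ grows is not given, and you explicitly defer the ``quantitative bookkeeping.'' Second—and more fundamentally—the ``cannot halt prematurely'' argument you sketch is not sound as stated. You want a small $Y$ with small $r_{3Y}$ to ``leave too many light sums of $A+A+A$ uncovered'' and derive a contradiction, but there is no reason $Y+Y+Y$ should cover $A+A+A$: in Ruzsa's own examples the spline set $Y$ has $|Y+Y+Y| \approx |A+A+A|$ as sets of comparable size, but $Y+Y+Y$ and the rest of $A+A+A$ can sit in different parts of the group, and ``light'' sums of $A$ need not have any representation at all inside $Y^3$. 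The Markov bound you extract ($r(s) \lesssim M(|A|/K)^{3/2}$ for most $s$) is also on an entirely different scale from the pointwise bound $(2M)^4$ you would need on $r_{3Y}$, and you give no mechanism to bridge that gap. As written this is an outline of an intended strategy, not a proof; the paper's route via Pl\"unnecke for a large subset is shorter, rigorous, and directly yields the stated constants.
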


Furthermore, in Examples \ref{ex:gap} and \ref{ex:random} of this paper we modify Ruzsa's construction to obtain more extremal examples to show that one could not hope to say much structurally about the set \(A\setminus Y\) which is analogous to the cube in Example \ref{ex:ruzsa_construction}. The key feature that these examples have in common is ``having splines", making our inverse result qualitatively best possible. The conclusions of Theorem \ref{thm:inverse_theorem_2} are nontrivial even when \(M\) is \(|A|^{\varepsilon}\) for a small \(\varepsilon > 0.\)

In Section \ref{sec:perspectives}, we will survey various approaches that have appeared in the literature to prove Ruzsa's inequality and provide new perspectives. 
In Theorem \ref{thm:macaulay_sumsets}, we will state Ruzsa's inequality in a slightly sharper form (with respect to the multiplicative constant), while characterizing the extremal cases exactly to be \(3\)-dissociated sets described in Example \ref{ex:dissociated_set}. This is a known result by the work of Eliahou and Mazumdar \cite{eliahou_mazumdar}, but we provide a new and short proof here. Using these ideas we obtain a stability version of Ruzsa's inequality. 

\begin{theorem}[Stability for Theorem \ref{thm:macaulay_sumsets}] \label{thm:structural_stability_ruzsa}
	Let \(\delta <1/1152\) and \(A\) be a large enough\footnote{This will be made clear later in the proof in Section \ref{sec:stability}.} subset of a commutative group. Suppose \( |A+A| = \binom{x+1}{2}\) for some real number \( x\geq 1.\) If \[|A+A+A| \geq (1-\delta)\binom{x+2}{3},\] then there exists a subset \(Y\subset A\) with \(\displaystyle (1-6\delta^{1/2})x < |Y| < (1+9\delta^{1/2})(x+1)\) such that \[ |Y+Y+Y| \geq (1-50\delta^{1/2}) \binom{|Y| + 2}{3}.\] 
\end{theorem}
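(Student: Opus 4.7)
My plan is to exploit the exact characterization in Theorem \ref{thm:macaulay_sumsets}: equality in $|A+A+A|\le\binom{x+2}{3}$ forces $A$ to be $3$-dissociated. The $99\%$-stability conclusion should say that near-equality forces $A$ to be nearly $3$-dissociated, and in particular that $|A|$ itself is pinched into a narrow window around $x$. If this can be shown, then taking $Y:=A$ should work, and the numerical bounds in the conclusion will follow from a direct comparison of binomial coefficients.

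The lower bound $|A|\ge x$ is immediate from $|A+A|\le\binom{|A|+1}{2}$ together with the hypothesis $|A+A|=\binom{x+1}{2}$. The real work, and the \emph{main obstacle}, is establishing the upper bound $|A|\le (1+9\delta^{1/2})(x+1)$. Heuristically, any excess $|A|-x>0$ means a positive $2$-sum defect $\binom{|A|+1}{2}-|A+A|$, hence many pairs $\{a,b\}\ne\{c,d\}$ with $a+b=c+d$; each such $2$-sum collision lifts, by adjoining any $e\in A$, to a $3$-sum collision $\{a,b,e\}$ versus $\{c,d,e\}$. Quantifying how these lifted $3$-sum collisions aggregate into the deficit $\binom{|A|+2}{3}-|A+A+A|$ should force the latter to exceed $\delta\binom{x+2}{3}$ once $|A|$ overshoots $(1+9\delta^{1/2})(x+1)$, contradicting the hypothesis. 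The naive propagation estimate (``$3$-sum excess $\ge |A|\cdot (2$-sum excess$)/3$'') turns out to be too weak to close the loop in $\delta$: a sharper argument, probably via Cauchy--Schwarz on the multiset representation counts $\sum_s m_2(s)^2$ and $\sum_s m_3(s)^2$, or via a near-extremal refinement of the compression-based proof of Theorem \ref{thm:macaulay_sumsets} sketched in Section \ref{sec:perspectives}, is expected to be required. The ``large enough $A$'' clause absorbs the lower-order binomial corrections throughout.

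Once $|A|$ is pinned in the window, I set $Y:=A$. The first conclusion is then immediate. For the second, I write
\[
\frac{|Y+Y+Y|}{\binom{|Y|+2}{3}}
= \frac{|A+A+A|}{\binom{|A|+2}{3}}
\ge \frac{(1-\delta)\binom{x+2}{3}}{\binom{|A|+2}{3}},
\]
and expand $\binom{|A|+2}{3}/\binom{x+2}{3}$ as the product of the three ratios $(|A|+2)/(x+2)$, $(|A|+1)/(x+1)$, $|A|/x$, each of which is at most $1+O(\delta^{1/2})$ by the window bound on $|A|$ and $|A|$ large. A direct calculation, with the threshold $\delta<1/1152$ ensuring the cross-terms cooperate, yields the stated $1-50\delta^{1/2}$ constant.
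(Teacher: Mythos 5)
Your approach is genuinely different from the paper's, and it has a gap at exactly the point you identify as the ``main obstacle.'' You propose to take $Y := A$, which requires showing that the hypotheses force $|A|$ itself into the window $(1-6\delta^{1/2})x < |A| < (1+9\delta^{1/2})(x+1)$; you correctly note that the lower bound is immediate, then acknowledge that you do not have a proof of the upper bound and that the naive collision-lifting estimate does not close. That is the whole theorem: once $|A|$ is pinched, the rest of your argument (the binomial ratio comparison) is routine, so the unproved step is not a loose end but the load-bearing one. It is also worth noting that the pinching claim is not an obvious consequence of the setup; the paper's statement deliberately asserts only the existence of a subset $Y\subset A$, precisely because the hypotheses need not control $|A|$ directly — $A$ could carry additional elements that create $3$-sum collisions concentrated on few sums, which is exactly the failure mode you flag for the naive $D_3\gtrsim |A|\,D_2$ propagation.

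The paper avoids the pinching issue entirely. It transfers the problem to the auxiliary graph $G$ from the Kruskal–Katona proof of Section \ref{sec:perspectives}, applies Keevash's stability theorem for Kruskal–Katona (Theorem \ref{thm:keevash_kk_stability}) to produce a vertex set $Y$ of size $\approx x$ containing most triangles, and then converts triangles-in-$Y$ back into elements of $Y+Y+Y$ while bounding the contribution of lex-minimal triples that either have a repeated coordinate or leave $Y$. This yields the subset $Y$ directly without needing any control on $|A|$. If you want to rescue your route, you would essentially have to prove a stability version of the Clements–Lindström/Macaulay bound sharp enough to control $|A|$ in terms of $\delta$; the Cauchy–Schwarz idea on $\sum_s m_2(s)^2$ and $\sum_s m_3(s)^2$ does not obviously give this because the deficits $D_2,D_3$ are $\ell^1$-type quantities while the lifting argument produces $\ell^2$-type (pair-collision) quantities, and these decouple when multiplicities concentrate. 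As it stands, the proposal does not constitute a proof.
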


The above result says that if \(|A+A+A|\) is very nearly optimal, then \(A\) is close to Example \ref{ex:dissociated_set}, in the sense that \(A\) contains a subset \(Y\) which is almost \(3\)-dissociated and \(Y+Y+Y\) contains most of \(A+A+A.\) Such results which describe nearly optimal structures have come to be known as ``99\%-stability" results in combinatorics. We will note here that our inverse result Theorem \ref{thm:inverse_theorem_2} gives structural information about \(A\) even if \(|A+A+A|\geq \frac{1}{M}K^{3/2}|A |^{3/2}\) where \(M\) could be large, in contrast to Theorem \ref{thm:structural_stability_ruzsa}. 

Generalizations of Ruzsa's inequality in different directions can be found in the works of Gyarmati-Matolcsi-Ruzsa \cite{DifferentSumGyaMaRu, GyMaRu}, Balister-Bollob\'as \cite{balister_bollobas} and Madiman-Marcus-Tetali \cite{madiman_tetali_marcus}. The most natural one is an estimate of the size of \((h+1)A\) in terms of the size of \(hA\), namely
\begin{equation}\label{eq:Ruzsa_hvs(h-1)}
	|(h+1)A| \leq |hA|^{\frac{h+1}{h}} = \alpha^{\frac{h+1}{h}}|A|^{\frac{h+1}{h}},
\end{equation}
where \(\alpha = \frac{|hA|}{|A|}.\) Ruzsa's construction (Example \ref{ex:ruzsa_construction}) extends in a straightforward way to show that inequality \eqref{eq:Ruzsa_hvs(h-1)} is sharp up to a constant factor (depending on \(h\)). In our discussion upto Section \ref{sec:inverse_result} of this paper we will restrict to the original case (\(h=2\)) for ease of exposition. However, much of it can be easily extended to a general \(h\geq 2\) and this is described in Section \ref{sec:general_inverse result}.

\begin{acknowledgment}
	The author would like to thank Cosmin Pohoata for suggesting the problem and for helpful conversations, as well as Noga Alon for valuable inputs regarding the Clements-Lindstr\"om Theorem (Proposition \ref{prop:clements_lindstrom}). The author is grateful to Giorgis Petridis for his constant encouragement and many helpful discussions.
\end{acknowledgment}

\section{Perspectives} \label{sec:perspectives}

A common theme in the various proofs of Ruzsa's inequality found in the literature is to use the isoperimetric nature of estimating the size of \(A+A+A\) in terms of the size of \(A+A.\) In this section, we will first survey some of these different perspectives and in Theorem \ref{thm:macaulay_sumsets} we will restate Ruzsa's inequality in a slightly sharper form in the sense of a better multiplicative constant. We would like to note at the beginning itself that this result has been obtained in different forms previously in the works of Noga Alon \cite{Alon_results_1} and Eliahou-Mazumdar \cite{eliahou_mazumdar}, but our goal here is to tie together the different perspectives. 

\textbf{Loomis-Whitney Inequality:} The discrete Loomis-Whitney inequality (in dimension 3) \cite{loomis_whitney} states that if $S \subset X_1\times X_2 \times X_3$, where $X_1, X_2$ and \(X_3\) are arbitrary sets and $S$ is finite, then $$|S|^2 \leq |\pi_{12} (S) | \cdot |\pi_{23}(S)| \cdot |\pi_{13}(S)|,$$ where \(|\cdot|\) denotes cardinality and \( \pi_{12} (S) \) is the projection of \(S\) into \(X_1\times X_2\) and so on. There are many well-known generalizations of the Loomis-Whitney inequality including the Uniform Cover Inequality \cite{balister_bollobas} and Shearer's Inequality \cite{shearer} from information theory. 
A proof of Ruzsa's inequality using the Loomis-Whitney inequality appears in the paper of Gyarmati-Matolcsi-Ruzsa \cite{GyMaRu}, a proof using Shearer's lemma appears in the paper of Madiman-Marcus-Tetali \cite{madiman_tetali_marcus} and a combination of both appears in the paper
of Balister-Bollob\'as \cite{balister_bollobas}.

We will briefly illustrate here the common ideas that they all use. Given a commutative group $G$ and a finite subset $A\subset G$, let `$\leq$' be any total order on the elements of A. This can be extended to $A\times A$ and $A\times A\times A$ via the corresponding lexicographic orders. We embed $A+A+A$ in $A\times A\times A$ by mapping a sum $s\in A+A+A$ to the lexicographically minimal triple $(x, y, z) \in A\times A\times A$ such that $x + y + z = s.$ Let us call this embedding $S,$ so that $|S| = |A+A+A|.$ In a similar fashion, we embed $A+A$ in $A\times A$ by taking the lexicographically minimal representative for each sum. If we call this embedding $C,$ then we have $|C| = |A+A|.$ Now we make the following simple but crucial observation. 

\begin{lemma}[Key observation]\label{lem:projection_observation}
	Let \(S\subset A\times A\times A\) and \(C\subset A\times A\) be as defined above (the lexicographically minimal embeddings of \(A+A+A\) and \(A+A\) respectively). Then we have \(\pi_{12}(S)\subset C, \pi_{13}(S)\subset C\) and \(\pi_{23}(S)\subset C.\)
\end{lemma}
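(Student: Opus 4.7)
The plan is to argue by contradiction, exploiting the lex-minimality built into the definitions of $S$ and $C$. Fix an arbitrary triple $(x,y,z) \in S$ and check each of the three containments $\pi_{12}(S) \subset C$, $\pi_{23}(S) \subset C$, $\pi_{13}(S) \subset C$ in turn. In each case, if the corresponding pair were not already in $C$, I will replace two of the coordinates of $(x,y,z)$ by a lex-smaller representative of the same partial sum, produce a triple in $A\times A\times A$ with the same total sum $x+y+z$, and observe that it is lex-smaller than $(x,y,z)$, contradicting the fact that $(x,y,z)$ is the lex-minimal representative of $x+y+z$ in $S$.

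The containments $\pi_{12}(S)\subset C$ and $\pi_{23}(S)\subset C$ are essentially immediate. If $(x,y)\notin C$, then there exists $(x',y')\in A\times A$ with $x'+y'=x+y$ and $(x',y') <_{\text{lex}} (x,y)$; the triple $(x',y',z)$ then contradicts the minimality of $(x,y,z)$. For $\pi_{23}(S)$, a lex-smaller representative $(y',z')$ of $y+z$ gives the triple $(x,y',z')$, which agrees with $(x,y,z)$ in the first coordinate and is smaller in the remaining two.

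The containment $\pi_{13}(S)\subset C$ is where the only (mild) care is needed, since the coordinates $1$ and $3$ are not adjacent in the lex order. I will split on how the witness $(x',z')<_{\text{lex}}(x,z)$ beats $(x,z)$. If $x' < x$, then the triple $(x',y,z')$ already beats $(x,y,z)$ on the first coordinate, so it is lex-smaller regardless of $y$ or $z'$. If $x' = x$ and $z' < z$, then $(x,y,z')$ agrees with $(x,y,z)$ in the first two coordinates and is smaller in the third. Either way we obtain the required contradiction.

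The only thing worth emphasizing, and the only real point to verify, is that the order on $A\times A\times A$ really is the lexicographic extension of the order on $A$, so that an improvement in an earlier coordinate overrides anything happening in later coordinates. Once this is noted, each of the three cases reduces to a one-line substitution, and no additive structure beyond the identity $x+y+z = x'+y'+z$ (or its analogue) is ever used. I do not expect any genuine obstacle here; this lemma is really a bookkeeping step that will let the Loomis-Whitney-type inequalities in the next step bound $|S|$ in terms of $|C|$.
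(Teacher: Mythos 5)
Your proposal is correct and follows the same contradiction-via-lex-minimality argument as the paper, which proves the $\pi_{12}$ case explicitly and dismisses the other two with ``similarly.'' Your extra care in the $\pi_{13}$ case is well-placed, though it can be streamlined further: since $A$ sits in a group, $x'=x$ together with $x'+z'=x+z$ would force $z'=z$ by cancellation, so only the subcase $x'<x$ can actually occur.
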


\begin{proof}
	Let \((x, y, z) \in S\) and suppose that there exists $(x', y')\leq(x, y)$ in $A\times A$ such that $x + y = x' + y'.$ Then it follows that $(x', y', z) \leq (x, y, z)$ and $x' + y' + z = x + y + z.$ So by the minimality of $(x, y, z)$ in $A \times A \times A,$ this would imply $x = x'$ and $y = y'.$ Similarly, it follows that $(y, z) \in C$ and \((x, z) \in C.\) 
\end{proof}
 
Thus we have \(|\pi_{12} (S) | \leq |A+A|, |\pi_{23} (S) | \leq |A+A|\) and \(|\pi_{13} (S) | \leq |A+A|.\) Then a simple application of the Loomis-Whitney inequality to the set $S$ yields Ruzsa's inequality. 

The above argument treats the co-ordinates assymetrically, whereas due to commutativity of addition they ought to be symmetric. In fact, following the above argument, let us define a new set $\tilde S \subset A\times A\times A$ by including $S$ and all permutations of elements of $S$. The size of $\tilde S$ is about 6 times the size of $S,$ minus some lower order terms while the sizes of 2-projections of $\tilde S$ are at most 2 times the sizes of respective 2-projections of $S.$ By applying the Loomis-Whitney inequality to this modified set $\tilde S,$ one can asymptotically improve the multipicative constant in Ruzsa's inequality to get 

\begin{equation}\label{eq:ruzsa_sqrt2/3}
	|A+A+A| \leq \left( \frac{\sqrt 2}{3} + o(1) \right) |A+A|^{3/2}.  
\end{equation}

\textbf{Kruskal-Katona Theorem:} Equation \eqref{eq:ruzsa_sqrt2/3} can also be proved by using the Kruskal-Katona theorem instead of the Loomis-Whitney inequality. The Kruskal-Katona theorem concerns families of sets and their shadows, but we will work with the equivalent (hyper)graph version.  We will use here a condensed verison of the Kruskal-Katona theorem due to Lov\'asz \cite{lovasz_problems} which states that if the number of edges in a graph $G$ is given by $|G| = \binom{x}{2}$ for some real number $x\geq 2,$ then the number of triangles in the graph is at most $\binom{x}{3}$. We can also rewrite this to say that the number of triangles in a graph is at most \(\frac{\sqrt{2}}{3} |E(G)|^{3/2}.\) 

Moving back to sumsets, let $S\subset A\times A\times A$ and $C \subset A\times A$ be the sets as defined before Lemma \ref{lem:projection_observation}. Let us also define the sets
$$ S' = \{ (x, y, z) \in S \ | \ x \neq y \ \text{and} \ y\neq z\} \quad \text{and}\quad C' = \{ (x, y) \in C \ | \ x \neq y \}.$$ Consider a graph $G$ with vertex set $A$ such that $\{x, y\} \in E(G)$ iff $(x, y) \in C'$ or $(y, x)\in C'.$ Using Lemma \ref{lem:projection_observation} we see that if $(x, y, z) \in S'$ then \(\{x, y\}, \{y, z\}\) and \(\{x, z\}\) are all edges in \(G,\) and thus $x, y, z$ form a triangle in $G.$ So by the Kruskal-Katona Theorem, we obtain \begin{align*}
	|S'| \leq \frac{\sqrt{2}}{3} |C'|^{3/2}\leq \frac{\sqrt{2}}{3} |C|^{3/2}.
\end{align*}

Note that $|S'| \geq |S| - 2|C|$ by Lemma \ref{lem:projection_observation}, because $S'$ is obtained from $S$ by removing elements of the form $(x, x, y)$ and $(x, y, y).$ Thus we get 
\begin{align}\label{eq:KK_Ruzsa}
	|S| \leq \frac{\sqrt{2}}{3} |C|^{3/2} + 2|C|,
\end{align}

which yields \eqref{eq:ruzsa_sqrt2/3}.

After writing a first draft of the paper, the author learned that a similar sumset analogue of the Kruskal-Katona theorem was also obtained independently by Noga Alon in \cite{Alon_results_1} (Theorem 2.3), in the restricted setting where the group \(G\) consists only of idempotent elements, though the argument holds in general.

\textbf{Clements-Lindstr\"om Theorem:} In the Kruskal-Katona argument above, the lower order term \(2|A+A|\) seems a little wasteful. The reason we got it is that we could not handle repeated terms in a sum using the Kruskal-Katona Theorem, since the theorem only applies to sets and not multisets. One can however get around this issue by using a generalization of the Kruskal-Katona theorem to multisets due to Clements and Lindstr\"om \cite{clements-lindstrom}. We will do this in Theorem \ref{thm:macaulay_sumsets}. The specific case of the Clements-Lindstr\"om theorem we will use was first proved by Macaulay \cite{macaulay}, but in a different setting of Hilbert functions of certain modules. We refer the reader to an article by Greene and Kleitman \cite{greene-kleitman} (see Section 8) which presents all of these results in a unified combinatorial setting.

In order to state the Clements-Lindstr\"om theorem let us set up some notation. Let \(X = \{x_1, x_2, \ldots\}\) be a ground set with a total ordering \(x_1 < x_2 < \ldots\) and let \(X^{(k)}\) denote the set of all \(k\)-element multi-sets with elements from \(X.\) The (lower) shadow of a set \( B \subset X^{(k)} \) is defined as follows \[ \partial B = \{ R \in X^{(k-1)} \ : \ R \subset P \text{ for some } P \in B \}. \] We define the lexicographic order on \(X^{(k)}\) as follows: if \(P, Q \in X^{(k)}\) are distinct then \( P \leq Q  \) if the smallest element of \(P \Delta Q\) (the symmetric difference) belongs to \(P.\)

Recall also the fact that when \(\binom{x+1}{2}\) is considered as a polynomial in the real variable $x,$ then it is an increasing function and that any real number $r\geq 1$ can be represented uniquely as \(r = \binom{x+1}{2}\) for some real number \(x \geq 1\).

\begin{prop}[Clements-Lindstr\"om \cite{clements-lindstrom}, Macaulay \cite{macaulay}] \label{prop:clements_lindstrom}
	 If \(S \subset X^{(3)}\) is finite and \(|\partial S| = \binom{x+1}{2}\) for some real number \(x \geq 1,\) then \( |S| \leq \binom{x+2}{3}. \) Equality holds if and only if \(x\) is an integer and \(S\) is the set of all \(3\)-element multi-subsets of an \(x\)-element set.
\end{prop}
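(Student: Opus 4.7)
The plan is to use the classical compression approach standard for Kruskal-Katona-type theorems. Fix a total order $x_1 < x_2 < \ldots$ on $X$ and equip $X^{(k)}$ with the colex order: distinct multisets $P, Q$ satisfy $P < Q$ if, at the largest index $j$ where $P$ and $Q$ have different multiplicities, the multiplicity in $Q$ exceeds that in $P$. For indices $i<j$, define the compression $C_{ij}$ on $X^{(3)}$ to replace one occurrence of $x_j$ by $x_i$ (when some $x_j$ is present), and extend it to a set $S \subset X^{(3)}$ in the usual way: $P \in S$ is replaced by $C_{ij}(P)$ exactly when $C_{ij}(P) \notin S$.

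The core technical claim is that $|C_{ij}(S)| = |S|$ and $|\partial C_{ij}(S)| \leq |\partial S|$. The size identity is immediate; for the shadow inequality, I would construct an injection $\partial C_{ij}(S) \setminus \partial S \hookrightarrow \partial S \setminus \partial C_{ij}(S)$ by reversing the compression, namely sending a 2-multiset $R$ containing $x_i$ to the multiset obtained by replacing one copy of $x_i$ by $x_j$. Verifying that this map is well-defined and injective requires a case analysis on the multiplicities of $x_i$ and $x_j$ in $R$ together with what it means for $R$ to lie in each shadow, and this is the main technical obstacle, as the multiset setting is subtler than the ordinary set version of Kruskal-Katona.

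Iterating compressions terminates (the potential $\sum_{P\in S} \sum_{x_k \in P} k$ strictly decreases on every nontrivial application), leaving a set $S^*$ with $|S^*|=|S|$ and $|\partial S^*| \leq |\partial S|$ that is invariant under every $C_{ij}$. A short argument shows that such an invariant set is a colex initial segment of $X^{(3)}$. The initial segment of size $\binom{n+2}{3}$ is precisely the collection of all 3-multisets drawn from $\{x_1,\ldots,x_n\}$, whose shadow is all 2-multisets from the same ground set, of size $\binom{n+1}{2}$; for intermediate sizes the shadow size extends to a strictly increasing continuous function $g$ of $|S|$ satisfying $g\bigl(\binom{x+2}{3}\bigr) = \binom{x+1}{2}$ for every real $x \geq 1$.

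Combining the steps, $|\partial S| \geq |\partial S^*| \geq g(|S|)$, which immediately yields $|S| \leq \binom{x+2}{3}$ whenever $|\partial S| = \binom{x+1}{2}$. For the equality characterization, equality forces both $|\partial S^*| = g(|S^*|) = \binom{x+1}{2}$ and $|\partial C_{ij}(S)| = |\partial S|$ at every intermediate compression; the strict monotonicity of $g$ then forces $x$ to be a positive integer, and tracing back the equality cases in the shadow-compression lemma shows that $S$ itself must already be the set of all $3$-multisets of some $x$-element subset of $X$. Carefully reverse-engineering the equality condition through each compression step is the most delicate part of the argument after the shadow-compression lemma itself.
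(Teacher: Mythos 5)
First, a framing remark: the paper does not prove Proposition \ref{prop:clements_lindstrom}; it cites it as a known result of Clements--Lindstr\"om and Macaulay and uses it as a black box in the proof of Theorem \ref{thm:macaulay_sumsets}. So there is no ``paper's proof'' to compare against, and your proposal should be judged as a sketch of the theorem itself.

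The compression framework you set up is the right one, and it is essentially how the theorem is proved in the literature. However, there is a concrete false step: the claim that a set invariant under every $C_{ij}$ must be a colex initial segment of $X^{(3)}$. This is not true, even for ordinary Kruskal--Katona, and it fails here as well. For instance, $S^* = \{x_1x_1x_1,\, x_1x_1x_2,\, x_1x_1x_3,\, x_1x_1x_4\}$ is fixed by every $C_{ij}$ (each compression $x_1x_1x_k \mapsto x_1x_1x_1$ or $x_1x_1x_k \mapsto x_1x_1x_l$ with $l<k$ lands in the family), yet its shadow $\{x_1x_1, x_1x_2, x_1x_3, x_1x_4\}$ has size $4$, while the colex initial segment of the same size $\{x_1x_1x_1, x_1x_1x_2, x_1x_2x_2, x_1x_1x_3\}$ has shadow $\{x_1x_1, x_1x_2, x_2x_2, x_1x_3\}$ wait\ldots let me instead use the cleanest one: $\{x_1x_1x_1, x_1x_1x_2, x_1x_2x_2, x_2x_2x_2\}$ has shadow $\{x_1x_1, x_1x_2, x_2x_2\}$ of size $3 < 4$. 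So a fully compressed family need not minimize the shadow among families of its size, and the chain $|\partial S|\geq|\partial S^*|\geq g(|S|)$ does not follow from compression alone. In the standard proofs (e.g.\ Frankl's or Daykin's treatment of Kruskal--Katona, and Clements--Lindstr\"om's original argument in the product-of-chains setting), after compressing one does not conclude that $S^*$ is an initial segment; instead one carries out an induction on the ground set, splitting $S^*$ according to whether elements contain the top symbol, and uses $i$-compressedness to control each part. That extra inductive argument is what is genuinely missing from your sketch.

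Two further points worth flagging. The passage from the integer statement to the ``condensed'' real-parameter form (the Lov\'asz-style bound, where $x$ is allowed to be any real $\geq 1$) is glossed over with ``extends to a strictly increasing continuous function $g$''; this needs a short but separate argument showing that the exact Clements--Lindstr\"om lower bound on the shadow dominates the polynomial $\binom{x+1}{2}$-bound when $|S|\leq\binom{x+2}{3}$. And for the equality characterization, ``tracing back the equality cases'' through an unbounded sequence of compressions is delicate: compressions can change $S$ drastically while preserving shadow size, and one needs a rigidity argument (typically via iterated shadows: $\partial^2 S$ has size exactly $x$, $S\subset(\partial^2 S)^{(3)}$, and $|S|=\binom{x+2}{3}$ forces $S = (\partial^2 S)^{(3)}$) rather than reversing compressions one by one. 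The high-level plan is the correct and classical one, but in its current form the middle of the argument contains a false assertion that needs to be replaced by the standard post-compression induction.
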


The following result is the analogue of Clements-Lindstr\"om theorem for sumsets and a slight strengthening of Ruzsa's inequality. It was first obtained by Eliahou and Mazumdar in \cite{eliahou_mazumdar}. They translate the problem to the set up of Hilbert functions of standard graded algebras and then make use of the algebraic version of Macaulay's theorem. The argument bears some resemblance to Khovanskii's Theorem \cite{khovanskii}. Our proof is more direct and connects to previous approaches within additive combinatorics. We shall also note that it is possible to state a slightly sharper version of Proposition \ref{prop:clements_lindstrom} (and hence Theorem \ref{thm:macaulay_sumsets}, as is given in \cite{eliahou_mazumdar}) if we write \(|\partial S|\) as a sum of binomial coefficients (see \cite{greene-kleitman}), but we will stick to the condensed version, similar to the condensed version of Kruskal-Katona theorem due to Lov\'asz \cite{lovasz_problems}. See also \cite{eliahou_mazumdar_optimal}, where sharp examples are constructed for the stronger form, in the semigroup setting.

\begin{theorem}[Eliahou-Mazumdar \cite{eliahou_mazumdar}] \label{thm:macaulay_sumsets}
	 Let \( A \) be a finite subset of a commutative group. If \(|A+A| = \binom{x+1}{2}\) for some \(x\geq 1\), then \[|A+A+A| \leq \binom{x+2}{3}.\] This is best possible if \(x\) is an integer and in this case equality holds if and only if \(A\) is 3-dissociated.
\end{theorem}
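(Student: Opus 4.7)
My plan is to cast the statement as a shadow-inequality for multi-sets and apply the Clements--Lindstr\"om theorem (Proposition \ref{prop:clements_lindstrom}), building directly on the lex-minimal-representative framework introduced in the Loomis--Whitney and Kruskal--Katona arguments of this section. Fix a total order on $A$ and let $S\subset A\times A\times A$ and $C\subset A\times A$ be the lex-minimal embeddings of $A+A+A$ and $A+A$ from the setup preceding Lemma \ref{lem:projection_observation}. A one-line swap argument shows each tuple in $S$ is sorted: exchanging two unsorted coordinates strictly decreases the lex-rank while preserving the sum; the same holds for $C$. Thus $S$ and $C$ are canonically identified with multi-set families $\hat S\subset A^{(3)}$ and $\hat C\subset A^{(2)}$ with $|\hat S|=|A+A+A|$ and $|\hat C|=|A+A|$, and Lemma \ref{lem:projection_observation} translates to $\partial \hat S\subset \hat C$.

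The main inequality is then immediate. If $|\partial \hat S|=\binom{x'+1}{2}$ for some $x'\ge 1$, Proposition \ref{prop:clements_lindstrom} gives $|\hat S|\le \binom{x'+2}{3}$, and since $|\partial \hat S|\le |\hat C|=\binom{x+1}{2}$ with $\binom{t+1}{2}$ strictly increasing in $t\ge 1$, we obtain $x'\le x$ and hence $|A+A+A|\le \binom{x+2}{3}$.

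For the equality case, the direction ``$3$-dissociated implies equality'' is a direct count with $x=|A|$. For the converse, suppose $x\in \mathbb{Z}$ and $|A+A+A|=\binom{x+2}{3}$. Tracing back the chain of equalities forces $x'=x$, hence $\partial \hat S=\hat C$, and the equality case of Proposition \ref{prop:clements_lindstrom} identifies $\hat S$ with the family $B^{(3)}$ of all $3$-element multi-subsets of some $B\subset A$ with $|B|=x$. Consequently $\hat C=\partial \hat S=B^{(2)}$, so that $B$ is $2$-dissociated and $A+A=B+B$.

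The crux, which I expect to be the main obstacle, is to show $A=B$; once this is established, $\hat S=A^{(3)}$ realizes every $3$-multi-set as its own lex-minimal representative, forcing $A$ to be $3$-dissociated. My plan is a descent argument: given any $a\in A\setminus B$, the relation $2a\in B+B$ combined with lex-minimality produces $b_0,b_1\in B$ with $2a=b_0+b_1$ and $b_0<a<b_1$ (the boundary case $b_0=a$ would immediately force $b_1=a\in B$). Applying the same principle to $a+b_0\in B+B$ and comparing to the pair $(b_0,a)$ yields some $b_2\in B$ with $b_2<b_0$, since $b_2=b_0$ would again push $a$ into $B$. Iterating this step produces an infinite strictly decreasing chain $b_0>b_2>b_4>\cdots$ inside the finite set $B$, the desired contradiction. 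Hence $A=B$, completing the proof.
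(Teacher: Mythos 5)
Your proof is correct and follows essentially the same route as the paper: pass to lexicographically minimal multiset representatives, observe that the shadow of $\hat{S}$ lands inside $\hat{C}$, and invoke the Clements--Lindstr\"om theorem. The one place you go beyond the paper is the equality case, where the paper simply cites the extremal characterization of Proposition \ref{prop:clements_lindstrom} and leaves the step from $\hat S = B^{(3)}$ (for some $B\subset A$ with $|B|=x$) to ``$A$ is $3$-dissociated'' implicit; your descent argument makes this explicit, which is a genuine and welcome addition. Two small remarks. First, your intermediate assertion $b_0 < a < b_1$ over-claims: since the total order on $A$ is an arbitrary order unrelated to the group operation, $2a = b_0 + b_1$ with $b_0 < a$ gives no information about the position of $b_1$ relative to $a$; fortunately, only $b_0 < a$ is used in the descent, so the argument is unaffected. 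Second, there is a one-step alternative to the descent: if $a_1$ is the least element of $A$ in the chosen total order, then for any $a\in A$ the multiset $\{a_1,a_1,a\}$ is automatically its own lex-minimal representative (any competitor would need first two coordinates equal to $a_1$, forcing the third to be $a$), so $\{a_1,a_1,a\}\in\hat S = B^{(3)}$ and hence $a\in B$, giving $A\subseteq B$ directly.
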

\begin{proof}
	Let \(S \subset A^{(3)}\) be the set of lexicographically minimal members of \(A^{(3)}\) which represent sums in \(A+A+A\) and let $C$ be the set of lexicographically minimal members of \(A^{(2)}\) which represent sums in \(A+A. \) Our observation in Lemma \ref{lem:projection_observation} still holds in this new setting, that each multiset in the lower shadow of an element of \(S\) is the lexicographically minimal representative in \(A^{(2)}\) of a sum in \(A+A,\) and thus \(\partial S \subseteq C.\) In fact, one can also show that \( C \subseteq \partial S,\) by checking that if \( \{x_1, x_2\} \in C \) then \( \{a_1, x_1, x_2\} \in S,\) where \(a_1\) is the smallest element of \(A\) in the total order on \(A.\) So we have \( |\partial S| = |A+A| = \binom{x+1}{2} \) and the Clements-Lindstr\"om theorem implies that \(|A+A+A| = |S| \leq \binom{x+2}{3}\). 
	
	When \(x\) is an integer we see that equality can hold when \(A\) is \(3\)-dissociated as in Example \ref{ex:dissociated_set}. Moreover, the characterization of extremal cases in the Clements-Lindstr\"om Theorem (Proposition \ref{prop:clements_lindstrom}) shows that equality holds in this case only when \(A\) is a 3-dissociated set. 
	\end{proof}


\section{Stability using Kruskal-Katona} \label{sec:stability}
 
Keevash has proved the following stability version of Kruskal-Katona theorem in \cite{keevash_kk}. Recall that equality is attained in the Kruskal-Katona Theorem when the graph is complete. The stability version says that if the number of triangles in a graph is very close to being maximal, then there is a part of the graph which contains most of the triangles and is close to being complete.

\begin{theorem}[Keevash \cite{keevash_kk}] \label{thm:keevash_kk_stability}
	Suppose \(0 < \varepsilon < 1/2, x \geq 3(1+\varepsilon), \delta <\varepsilon^2/144\) and \(G\) is a graph with \(\binom{x}{2}\) edges. If the number of triangles in \(G\) is at least \((1-\delta)\binom{x}{3},\) then:
	\begin{enumerate}
		\item There exists a set \(Z\) of \(\lceil x \rceil\) vertices in \(G\) such that all but at most \(20(\varepsilon^{-1}+1)\delta^{1/2}\binom{x}{2}\) edges of \(G\) are contained in \(Z.\)
		\item There exists a set \(Y\) of size \(|Y| < (1 + 6\delta^{1/2})x\) vertices that contains at least \(\binom{(1-4\delta^{1/2})x}{3}\) triangles.
	\end{enumerate}
\end{theorem}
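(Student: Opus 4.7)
The plan is to peel off vertices greedily in order of decreasing degree and apply the Kruskal--Katona inequality at each stage, combined with a convexity argument that forces the degree sequence to be close to the staircase $(x-1, x-2, \ldots, 1, 0)$ realized by $K_{\lceil x \rceil}$. The starting point is an identity: order the vertices as $v_1, v_2, \ldots$ by decreasing degree, set $G_i := G \setminus \{v_1, \ldots, v_{i-1}\}$, and let $d_i := d_{G_i}(v_i)$. Each triangle of $G$ is counted exactly once when we enumerate edges in the neighborhood of its lowest-indexed vertex, giving $t(G) = \sum_i e_{G_i}(N_{G_i}(v_i)) \leq \sum_i \binom{d_i}{2}$, with $\sum_i d_i = |E(G)| = \binom{x}{2}$. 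The staircase attains equality by the hockey-stick identity $\sum_{i\geq 1} \binom{x-i}{2} = \binom{x}{3}$.

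The first main step is to show that the near-extremal hypothesis forces $d_i$ to be quantitatively close to $\max(x-i, 0)$. Subject to $\sum_i d_i = \binom{x}{2}$ and $d_1 \geq d_2 \geq \cdots$, the sum $\sum_i \binom{d_i}{2}$ is maximized exactly by the staircase, since any downward rearrangement (moving a unit from $d_i$ to $d_j$ with $j>i$ and $d_i > d_j+1$) strictly decreases this convex sum. A quantitative stability for this optimization, obtained by a second-difference argument on $\binom{t}{2}$, yields that the $L^1$-deviation of $(d_i)$ from the staircase is $O(\delta^{1/2} x^2)$, with pointwise deviations of the first $\lceil x \rceil$ entries being $O(\delta^{1/2} x)$. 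Two sources of slack must be tracked here: the gap between $\binom{d_i}{2}$ and its ideal staircase value, and the gap $\binom{d_i}{2} - e_{G_i}(N_{G_i}(v_i))$, which measures how far $N_{G_i}(v_i)$ is from being a clique; Kruskal--Katona applied one level down bounds the latter gap, and together these two defects absorb the deficit $\delta\binom{x}{3}$, producing the claimed square-root dependence.

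Having pinned down the degree sequence, take $Z := \{v_1, \ldots, v_{\lceil x \rceil}\}$ for part~(1). Any edge of $G$ with an endpoint outside $Z$ either contributes to $d_i$ in excess of the ``within-$Z$'' cap $\lceil x \rceil - i$ for some $i \leq \lceil x \rceil$, or lies entirely in $G \setminus Z$ (contributing to $d_j$ for some $j > \lceil x \rceil$); both counts are controlled by the $L^1$-deviation established above and yield the $20(\varepsilon^{-1}+1)\delta^{1/2}\binom{x}{2}$ bound, with the $\varepsilon^{-1}$ arising from the cutoff enabled by $x \geq 3(1+\varepsilon)$. For part~(2), I modify $Z$ by discarding vertices of near-zero degree and adjoining the $O(\delta^{1/2} x)$ further vertices whose degrees exceed the staircase value past index $\lceil x \rceil$, producing a set $Y$ of size at most $(1+6\delta^{1/2})x$. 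Since $d_i \geq x - i - O(\delta^{1/2} x)$ for $i \leq (1-4\delta^{1/2})x$ and the neighborhoods of these vertices are almost cliques, the first $(1-4\delta^{1/2})x$ vertices in the ordering span a near-clique inside $Y$ that contributes at least $\binom{(1-4\delta^{1/2})x}{3}$ triangles.

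The main obstacle is the quantitative convexity step: converting the scalar hypothesis $\sum_i \binom{d_i}{2} \geq (1-\delta)\binom{x}{3}$ into pointwise $O(\delta^{1/2} x)$ bounds on the deviations. Single-index deviations contribute only quadratically to $\binom{\cdot}{2}$, which is the structural reason for the square-root loss, but the cost of a unit decrease at index $i$ is roughly $x-i$ (not uniform), so a weighted accounting is needed to prevent the tail from hiding a few large deviations near the front of the sequence. The hypothesis $x \geq 3(1+\varepsilon)$ provides exactly the slack required for a clean cutoff between ``main'' indices (where pointwise bounds are needed) and the ``tail'' (where only the aggregate matters), and carefully tracking $\varepsilon^{-1}$ through this cutoff yields the stated constant.
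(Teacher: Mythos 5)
This theorem is not proved in the paper at all: it is imported verbatim from Keevash's paper and used as a black box, so the only question is whether your argument stands on its own. It does not, because its central step rests on a false extremal claim. You assert that, subject to $\sum_i d_i = \binom{x}{2}$ and $d_1 \ge d_2 \ge \cdots$, the sum $\sum_i \binom{d_i}{2}$ is maximized exactly by the staircase $(x-1, x-2, \ldots, 1, 0)$, and you justify this by noting that moving a unit from $d_i$ to $d_j$ with $j>i$, $d_i > d_j+1$ strictly decreases the sum. That observation is correct but proves the opposite of what you need: since such smoothing moves decrease a convex sum, the maximizer under your stated constraints is the most \emph{concentrated} sequence, not the staircase. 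Concretely, a star with $\binom{x}{2}$ edges has $d_1=\binom{x}{2}$ and $\sum_i\binom{d_i}{2}=\binom{\binom{x}{2}}{2}\gg\binom{x}{3}$, so the two facts you work with, namely $t(G)\le\sum_i\binom{d_i}{2}$ and $\sum_i d_i=\binom{x}{2}$, are too weak even to recover the Kruskal--Katona bound $t(G)\le\binom{x}{3}$, let alone a stability version of it. The missing ingredient is precisely the inductive content of Kruskal--Katona: a vertex whose neighborhood is (nearly) a clique forces many edges, hence large residual degrees, among the later vertices. Your remark that ``Kruskal--Katona applied one level down bounds the latter gap'' controls the clique defect of each neighborhood, but it does not convert the scalar deficit $\delta\binom{x}{3}$ into the claimed $L^1$ and pointwise $O(\delta^{1/2}x)$ control of the $d_i$, and everything downstream (the choice of $Z$, the count of edges leaving $Z$, and the constants $20(\varepsilon^{-1}+1)$, $6\delta^{1/2}$, $4\delta^{1/2}$) depends on exactly that control.

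Two further problems: the monotonicity $d_1\ge d_2\ge\cdots$ of the \emph{residual} degrees is not automatic, since you fix the order by degrees in $G$ but measure $d_i$ in $G_i$ (two disjoint cliques of different sizes, or a vertex $v_3$ adjacent to neither $v_1$ nor $v_2$, already break it); and none of the stated constants in parts (1) and (2) are actually derived, so even a repaired convexity step would leave the quantitative conclusion unverified. For comparison, Keevash's own argument works in the set-system formulation and extracts stability from an analysis of near-equality in (his proof of) Kruskal--Katona via shadows and links, rather than from a degree-sequence optimization of the kind you propose; if you want a graph-theoretic route, you would need to build the neighborhood-forces-edges mechanism into the induction rather than relying on $\sum_i\binom{d_i}{2}$ alone.
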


Our proof of Ruzsa's inequality using the Kruskal-Katona theorem can be combined with Theorem \ref{thm:keevash_kk_stability} to obtain a stability version of Ruzsa's inequality (as restated in Theorem \ref{thm:structural_stability_ruzsa}). However, we can only get stability if \(|A+A+A|\) is away from the upper bound by a factor of at most \((1-\delta)\), where \(\delta\) is bounded by a very small constant and \(A\) is large enough. We have not tried to be very careful about the dependence of \(|A|\) on \(\delta\) in the below theorem, since it is not the main result of the paper and only serves to provide contrast between different techniques. In the next section, we will provide an inverse result (Theorem \ref{thm:inverse_theorem_2}) which allows \(|A+A+A|\) to be much farther from the upper bound than in the following result.

We should also remark that the author first tried to use a stability result for Loomis-Whitney inequality by Ellis et. al. \cite{lw_stability} to obtain a structural result when \(|A+A+A| \geq (1-\delta)|A+A|^{3/2}.\) However we obtained a contradiction when \(\delta < \frac{1}{1133}\) and \(|A|\) is large, which make sense in hindsight because of the improved constant in Equation \eqref{eq:ruzsa_sqrt2/3} as opposed to Equation \eqref{eq:Ruzsa_ineq_original}. This is what motivated us in the first place to try and improve the multiplicative constant in Ruzsa's inequality and hence discover the other perspectives.

\begin{theorem*}[Theorem \ref{thm:structural_stability_ruzsa} restated] 
	Let \(\delta <1/1152\) and \(A\) be a large enough subset of a commutative group. Suppose \( |A+A| = \binom{x+1}{2}\) for some real number \( x\geq 1.\) If \[|A+A+A| \geq (1-\delta)\binom{x+2}{3},\] then there exists a subset \(Y\subset A\) with \(\displaystyle (1-6\delta^{1/2})x < |Y| < (1+9\delta^{1/2})(x+1)\) such that \[ |Y+Y+Y| \geq (1-50\delta^{1/2}) \binom{|Y| + 2}{3}.\] 
\end{theorem*}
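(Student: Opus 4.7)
My plan is to combine the Kruskal-Katona argument from Section 2 with Keevash's stability theorem (Theorem \ref{thm:keevash_kk_stability}). First, I would set up the sets $S,S' \subset A^3$ and $C,C' \subset A^2$ and the graph $G$ on vertex set $A$ from Section 2. The hypothesis $|A+A+A| \geq (1-\delta)\binom{x+2}{3}$, together with $|A+A| = \binom{x+1}{2}$ and the inequality $|S'| \geq |S| - 2|C|$, yields a lower bound on the number of triangles of $G$: $t(G) \geq (1-\delta)\binom{x+2}{3} - 2\binom{x+1}{2}$. Since distinct edges of $G$ correspond to distinct elements of $C' \subset C$, we have $|E(G)| \leq |A+A|$, which I write as $\binom{x'}{2}$ with $x' \leq x+1$.

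The next step is to apply Keevash's stability theorem. Kruskal-Katona gives $t(G) \leq \binom{x'}{3}$, which combined with the lower bound above forces $x'$ to be within a $(1-O(\delta))$ factor of $x$ whenever $|A|$ is large. A routine calculation then yields $t(G) \geq (1-\delta'')\binom{x'}{3}$ with $\delta'' = \delta + O(1/|A|)$. For $\delta < 1/1152$ and $|A|$ large, I pick $\varepsilon$ slightly larger than $12\delta''^{1/2}$ but with $\varepsilon < 1/2$, so that $\delta'' < \varepsilon^2/144$, and I apply part (2) of Theorem \ref{thm:keevash_kk_stability} to obtain $Y \subseteq A$ with $|Y| < (1+6\delta''^{1/2})x'$ such that $G[Y]$ contains at least $\binom{(1-4\delta''^{1/2})x'}{3}$ triangles. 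The sandwich $x' \in [(1-O(\delta))x,\, x+1]$ together with $\delta''$ close to $\delta$ then translates the cardinality bounds into the range $(1-6\delta^{1/2})x < |Y| < (1+9\delta^{1/2})(x+1)$, after absorbing the $O(1/|A|)$ errors into the ``$|A|$ large enough'' hypothesis.

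The remaining task, and the main obstacle, is to deduce $|Y+Y+Y| \geq (1-50\delta^{1/2})\binom{|Y|+2}{3}$ from the near-completeness of $G[Y]$. The crucial observation is that distinct edges of $G$ always correspond to distinct sums in $A+A$, since each edge is \emph{the} lex-minimal representative of its sum. This enforces a strong rigidity: if $\{a,b,c\}$ and $\{a',b',c'\}$ are two distinct $3$-element subsets of $Y$ both forming triangles of $G[Y]$ and satisfying $a+b+c=a'+b'+c'$, then they must be disjoint, because a shared vertex (say $c=c'$) would force $\{a,b\}=\{a',b'\}$ from the identity $a+b=a'+b'$ together with the edge-distinctness of sums. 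A near-complete $G[Y]$ combined with this ``disjointness of collisions'' bounds the number of collision multisets coming from proper $3$-subsets, while an analogous analysis for multisets with repetition (sums of the form $2a+b$ or $3a$) reduces any such collision to a short additive relation in $Y$ that the edge-uniqueness of sums in $G[Y]$ forbids on all but an $O(\delta^{1/2})\binom{|Y|+2}{3}$ portion. The most delicate part of the proof will be tracking the constants carefully enough to obtain the stated $50\delta^{1/2}$ factor, and ensuring all accumulated lower-order terms get absorbed by the ``$|A|$ large enough'' hypothesis.
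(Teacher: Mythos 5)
Your setup, lower bound on the triangle count $t(G)$, and application of Keevash's stability theorem all follow the paper's route. The divergence is in your final step, and there is a genuine gap there.

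You attempt to deduce $|Y+Y+Y| \geq (1-50\delta^{1/2})\binom{|Y|+2}{3}$ \emph{directly} from the near-completeness of $G[Y]$, using the observation that two distinct triangles of $G[Y]$ with the same vertex sum must be vertex-disjoint. That disjointness claim is correct, but it is far too weak to give the conclusion: for a fixed value $s\in Y+Y+Y$, the triangles of $G[Y]$ summing to $s$ being pairwise disjoint only caps their number at about $|Y|/3$. Dividing $\binom{|Y|}{3}$ triangles by this gives only $|Y+Y+Y|=\Omega(|Y|^2)$, not $(1-O(\delta^{1/2}))|Y|^3/6$. More fundamentally, ``$G[Y]$ near-complete'' is essentially a Sidon-type condition on pairwise sums; it does not imply anything like $3$-dissociativity, and it does not by itself force $|Y+Y+Y|$ to be near-maximal. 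The hypothesis $|A+A+A|\geq(1-\delta)\binom{x+2}{3}$ must be re-used at this stage, but your sketch only uses it earlier (to make $G$ triangle-dense) and drops it when trying to recover $|Y+Y+Y|$.

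The paper closes this gap without ever arguing that triangles of $G[Y]$ have distinct sums. Instead it works on the side of $S$, the set of lexicographically minimal representatives of $A+A+A$: it splits $S=S_Y\cup S_Y^c$ with $S_Y=S\cap(Y\times Y\times Y)$, notes that $|S_Y|\leq|Y+Y+Y|$ (automatic, since elements of $S$ represent \emph{distinct} sums and those in $S_Y$ lie in $Y^3$), and then bounds $|S_Y^c|$ by observing (via Lemma \ref{lem:projection_observation}) that its distinct-entry members yield triangles $T_Y^c$ with a vertex outside $Y$, plus a $2|A+A|$ term for repeated entries. Keevash's theorem controls $|T_Y^c|=|T|-|T_Y|$, and the lower bound on $|A+A+A|$ enters at the very end via $|Y+Y+Y|\geq|A+A+A|-|T_Y^c|-2|A+A|$. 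This accounting is the missing idea in your proposal: it transfers size information from $A+A+A$ into $Y+Y+Y$ through the lex-minimal representatives, rather than trying (unsuccessfully) to read it off from the subgraph structure of $G[Y]$ alone.
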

	
	When we say \(A\) is large enough, we mean that \(|A|\) is large enough so that the lower order terms appearing in our calculations below are small enough to be absorbed in the main terms; this will be clarified in the proof. Note that \(x \rightarrow \infty \) as \(|A| \rightarrow \infty,\) since \(\binom{x+1}{2} = |A+A| \geq |A| \) and \(\binom{x+1}{2}\) is a monotonically increasing function of \(x.\)
	
	Theorem \ref{thm:structural_stability_ruzsa} tells us that if \(A\) is large enough and \(|A+A+A|\) is a very close to the upper bound in Theorem \ref{thm:macaulay_sumsets}, then the set \(A\) contains a set of size roughly \(x\) which is close to being additively dissociated, where \(x\) is as defined in Theorem \ref{thm:structural_stability_ruzsa}.

\begin{proof}[Proof of Theorem \ref{thm:structural_stability_ruzsa}]
	 Suppose \(|A+A+A| \geq (1-\delta)\binom{x+2}{3}\). 
	 
	 Define sets \(S \subset A\times A \times A, C \subset A\times A\) and the graph \(G\) as in the proof of Equation \eqref{eq:ruzsa_sqrt2/3} above using the Kruskal-Katona theorem. We know that \(|G| \leq |A+A| = \binom{x+1}{2}.\) Let \(T\) denote the set of triangles in \(G.\)  Then by the Kruskal-Katona theorem, we have \(|T| \leq \binom{x+1}{3}.\)
	 
	 The proof of Equation \eqref{eq:ruzsa_sqrt2/3} implies that \(|A+A+A| \leq |T| + 2|A+A|.\) So  
	 \begin{align*}
	 		|T| &\geq |A+A+A| - 2 |A+A| \\
	 			&\geq (1-\delta)\binom{x+2}{3} - 2 \binom{x+1}{2} \\
	 			&\geq (1-2\delta)\binom{x+1}{3}.
	 \end{align*}
	 
	 Above, we are using the assumption that \(|A|\) (and hence \(x\)) is large enough so that \((1+\delta) \binom{x+1}{2} \leq \delta\binom{x+1}{3}.\)
	 
	 Then by Theorem \ref{thm:keevash_kk_stability}, there exists a set \(Y\) of vertices such that \(|Y| \leq (1+6\sqrt2\delta^{1/2})(x+1)\) that contains at least \(\binom{(1-4\sqrt 2\delta^{1/2})(x+1)}{3}\) triangles. (Note that this also implies that \(|Y| \geq (1-6\delta^{1/2})x \).) Let \(T_Y\) denote the set of triangles contained inside \(Y\) and let \(T_Y^c = T\setminus T_Y.\) Then we have 
	 \begin{align}
	 	|T_Y^c| &= |T| - |T_Y| \notag \\
	 				  &\leq \binom{x+1}{3} - \binom{(1-4\sqrt 2\delta^{1/2})(x+1)}{3} \notag \\
	 				  &\leq 4\sqrt{2} \delta^{1/2} (x+1)\binom{x+1}{2} \notag \\
	 			      &\leq 12\sqrt{2}\delta^{1/2} \binom{x+2}{3}. \label{eq:triangle_Yc_<}
	 \end{align}
	 
	 In the second-to-last step we have used the estimate that if \(a>b>2\) then \(\binom{a}{3} - \binom{b}{3} \leq (a-b) \binom{a}{2}\) (see Lemma 7 of \cite{keevash_kk}), and in the final step we use once again the fact that \(x\) is large enough.
	 
	 On the other hand, we partition \(S\) into \(S_Y = S \cap (Y\times Y\times Y)\) and \(S_Y^c = S \setminus S_Y.\) By the definition of \(S\) as the lexicographically minimal representatives of \(A+A+A\) we have \(|A+A+A| = |S| = |S_Y| + |S_Y^c|.\) First we observe that there is an injection from \(S_Y\) into \(Y+Y+Y,\) since elements of \(S_Y\) represent distinct elements of \(Y+Y+Y\), and so \[|S_Y| \leq |Y+Y+Y|.\]  Now consider the set \(|S_Y^c|,\) which consists of triples in \(S\) with at least one element outside of \(Y.\) Recall from Lemma \ref{lem:projection_observation} that any 2-projection of a triple in \(S\) (and in particular, \(S_Y^c\)) lies in \(C.\) In particular, a member of \(S_Y^c\) which is a triple of distinct elements would correspond to a triangle in \(G\) with atleast one vertex outside of \(Y\), i.e., a triangle in \(T_Y^c.\) On the other hand, the number of triples in \(S_Y^c\) with repeated elements can be crudely bounded above by \(2|A+A|\) as we did earlier, and so \(|S_Y^c| \leq |T_Y^c| + 2|A+A|.\) 
	 
	 Thus, we have 
	 \begin{align*}
	 	|A+A+A| = |S_Y| + |S_Y^c| \leq |Y+Y+Y| + |T_Y^c| + 2|A+A|.
	 \end{align*}
	 
	 We can rewrite this equation using the hypothesis on $|A+A+A|$ and Equation \eqref{eq:triangle_Yc_<} as 
	 \begin{align*}
	 	|Y+Y+Y| &\geq |A+A+A| - |T_Y^c| - 2|A+A| \\
	 				  &\geq (1-\delta)\binom{x+2}{3} -  12\sqrt{2} \delta^{1/2}\binom{x+2}{3}  - 2\binom{x+1}{2}  \\
	 				  &\geq (1-18\delta^{1/2}) \binom{x+2}{3}-2\binom{x+1}{2}  \\
	 				  &\geq(1-20\delta^{1/2})\binom{x+2}{3}. 
	 \end{align*}
	 
	  The second-to-last step follows from the fact that \(\delta < \delta^{1/2}\) and that \(12\sqrt2 + 1 < 18\), and in the final step we use once again the fact that \(x\) is large enough.
	  
	  The above equation says that \(Y+Y+Y\) contains most of \(A+A+A.\) If \(x\) is large enough, then using the fact that \(|Y| \leq (1+6\sqrt2 \delta^{1/2})(x+1)\) we can rewrite the above equation as \[|Y+Y+Y| \geq (1-50\delta^{1/2})\binom{|Y|+2}{3}.\] Above, we have used another estimate on binomial co-efficients, namely if \(a>b>0,\) then \(\binom{a}{3}/\binom{b}{3} > (a/b)^3\) (Fact (2) of \cite{keevash_kk}).
\end{proof}
 
\section{An inverse result} \label{sec:inverse_result}

In this section we will prove an inverse result to Ruzsa's inequality (Equation \eqref{eq:Ruzsa_ineq_original}), which says that if $|A+A|\leq K|A|$ and $|A+A+A|$ is close to the order of $K^{3/2}|A|^{3/2},$ then there is a subset of $A$ which behaves similar to the set of three splines in Ruzsa's construction (Example \ref{ex:ruzsa_construction}). Recall that the nature of the set of splines $Y$ in Ruzsa's examples is that $|Y+Y|$ is of the order $|Y|^2$ and $|Y+Y+Y|$ is of the order $|Y|^3.$ Moreover, the size of the set Y is of the order $\sqrt{K|A|}.$ 


\begin{theorem*}[Inverse result restated]
	Let $A$ be a finite non-empty subset of a commutative group with $|A+A| \leq K|A|.$ Suppose $ |A+A+A| \geq \frac{1}{M}K^{3/2}|A|^{3/2},$ where $1\leq  M \leq \frac{1}{2}\sqrt{\frac{|A|}{K}}.$ Then there exists $ Y \subset A$ such that \begin{enumerate}
		\item $ \displaystyle \frac{\sqrt{K|A|}}{(2M)^{1/3}} \leq |Y| < 2M{\sqrt{K|A|}}$ and 
		\item $ \displaystyle |Y+Y+Y| \geq \frac{|Y|^3}{(2M)^4}.$
	\end{enumerate}
\end{theorem*}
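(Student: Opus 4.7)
The plan is to exploit the lex-min slicing perspective developed in Section~\ref{sec:perspectives}. Embed $A+A+A$ as the set $S \subseteq A^3$ of lex-minimal representatives and $A+A$ as $C \subseteq A^2$, so that $|S| = |A+A+A| \geq K^{3/2}|A|^{3/2}/M$ and $|C| = |A+A| \leq K|A|$. For each $y \in A$ define the slice $S_y = \{(x,z) : (x,y,z) \in S\}$ and the one-dimensional sections $L_y = \{x : (x,y) \in C\}$ and $R_y = \{z : (y,z) \in C\}$. By Lemma~\ref{lem:projection_observation}, $S_y \subseteq L_y \times R_y$; summing over $y$, $\sum_y |L_y| = \sum_y |R_y| = |C|$ and $\sum_y |S_y| = |S|$.

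First I would extract a distinguished element $y^*$ by double-counting. From $|S| \leq \sum_y |L_y|\,|R_y| \leq (\max_y |L_y|)\,|C|$, the hypothesis forces some $y \in A$ with $|L_y| \geq |S|/|C| \geq \sqrt{K|A|}/M$, and simultaneously an averaging on slices yields some $y$ with $|S_y| \geq |S|/|A| \geq K^{3/2}|A|^{1/2}/M$. After some care one can select a single $y^*$ satisfying (a variant of) both bounds; set $L = L_{y^*}$ and $R = R_{y^*}$. The natural candidate is $Y = L \cup R \cup \{y^*\} \subseteq A$: each pair $(x,z) \in S_{y^*}$ yields a sum $x + y^* + z$ in $Y+Y+Y$, and distinct pairs give distinct sums by the lex-minimality of $S$, so at the very least $|Y+Y+Y| \geq |S_{y^*}|$.

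The size bound $|Y| \leq |L| + |R| + 1$, combined with the above averaging estimates and the hypothesis $M \leq \tfrac12 \sqrt{|A|/K}$ (which keeps $\sqrt{K|A|}/M$ well below $|A|$), is what yields the claimed range for $|Y|$. The hard part will be the ratio bound $|Y+Y+Y| \geq |Y|^3/(2M)^4$: the crude estimate $|Y+Y+Y| \geq |S_{y^*}|$ is too weak when $|L|, |R|$ are large, since $|L|\,|R|$ can be much smaller than $(|L|+|R|+1)^3/M^4$. To bridge the gap I would look for a large ``product-type'' substructure inside $S_{y^*}$, namely a combinatorial rectangle $Z_1 \times Z_2 \subseteq S_{y^*}$ extracted via a K\H{o}v\'ari--S\'os--Tur\'an or Cauchy--Schwarz type argument on the bipartite graph $(L, R, S_{y^*})$, and then replace $Y$ by $Z_1 \cup Z_2 \cup \{y^*\}$. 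Every sum $z_1 + y^* + z_2$ with $(z_1,z_2) \in Z_1 \times Z_2$ is then distinct, giving $|Y+Y+Y| \geq |Z_1|\,|Z_2|$, and a careful choice of the rectangle dimensions against $M$ is what should produce the clean $(2M)^{-4}$ factor. A split into cases based on the relative sizes of $|L|$, $|R|$ and $\sqrt{K|A|}$ is likely needed to keep $|Y|$ inside the required window throughout.
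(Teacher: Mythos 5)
Your approach is genuinely different from the paper's, which does not use the lexicographic embedding at all: the paper applies Pl\"unnecke's inequality for a large subset (Theorem \ref{thm:Plunnecke_for_large_hv2}), extracting $X \subseteq A$ with $|X| \geq (1-\delta)|A|$ and $|X+A+A| \leq \frac{K^2}{\delta}|A|$, takes $Y = A \setminus X$, uses $A+A+A \subseteq (X+A+A) \cup (Y+Y+Y)$ to deduce $|Y+Y+Y| \geq \frac{1}{M}K^{3/2}|A|^{3/2} - \frac{K^2}{\delta}|A|$, and then balances by choosing $\delta = 2M\sqrt{K/|A|}$. That $Y$ automatically has $|Y| \leq \delta|A| = 2M\sqrt{K|A|}$, and both claimed bounds fall out in three lines.

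Your plan, unfortunately, runs into an obstruction that no amount of ``care'' in the averaging can fix. The crucial observation you use --- that $Y \supseteq L_{y^*} \cup R_{y^*} \cup \{y^*\}$ gives $|Y+Y+Y| \geq |S_{y^*}|$ --- supplies far too little. By Lemma \ref{lem:projection_observation} one also has $\pi_{13}(S) \subseteq C$, so $S_{y^*}$ injects into $C$ (via $(x,z) \mapsto (x,z)$ or, more concretely, distinct pairs in $S_{y^*}$ give distinct sums $x+z \in A+A$). Hence $|S_{y^*}| \leq |A+A| \leq K|A|$ for \emph{every} $y^*$. On the other hand the theorem requires $|Y| \geq \frac{\sqrt{K|A|}}{(2M)^{1/3}}$ together with $|Y+Y+Y| \geq \frac{|Y|^3}{(2M)^4}$, which forces $|Y+Y+Y| \geq \frac{(K|A|)^{3/2}}{(2M)^5}$. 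For the single-slice bound $K|A|$ to reach this you would need $(2M)^5 \geq \sqrt{K|A|}$; this fails badly in the regime of interest (e.g.\ $M$ bounded, $|A|$ large), since $K|A| \geq |A|$ can be arbitrarily large while $M$ stays fixed. The required contribution to $|Y+Y+Y|$ is of the order $(K|A|)^{3/2}$, but a single slice of $S$ carries only $O(K|A|)$ sums, so this strategy is short by a factor of roughly $\sqrt{K|A|}$ --- one must aggregate many slices, at which point you are implicitly reinventing a covering argument like the paper's.

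Two further points. First, $|Y| = |L_{y^*}| + |R_{y^*}| + 1$ is not controlled from above; $|L_{y^*}|$ can be as large as $|A|$, and the hypothesis $M \leq \frac12\sqrt{|A|/K}$ precisely ensures that $|A| \geq 2M\sqrt{K|A|}$, so $Y$ may overshoot the required window with no mechanism to trim it. Second, the K\H{o}v\'ari--S\'os--Tur\'an/Cauchy--Schwarz route to a large rectangle $Z_1 \times Z_2 \subseteq S_{y^*}$ does not exist: those tools bound the \emph{number of copies} of small complete bipartite graphs, and a dense bipartite graph (e.g.\ a random one at constant density) generically contains no complete bipartite subgraph larger than logarithmic size. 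Even if you could extract such a rectangle, $|Z_1||Z_2| \leq |S_{y^*}| \leq K|A|$ and the same order-of-magnitude gap persists. The lex-min machinery in Section \ref{sec:perspectives} is the right tool for the sharp upper bound and the $99\%$-stability regime (Theorem \ref{thm:structural_stability_ruzsa}), but for the inverse statement with a general $M$ the Pl\"unnecke-for-a-large-subset decomposition is the appropriate mechanism.
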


The proof of our inverse result bears some resemblance to the analysis found in a proof of Ruzsa's inequality (in a more general setting) by Gyarmati-Matolcsi-Ruzsa (Theorem 4.4 in \cite{GyMaRu}). Let us first recall what is referred to as Pl\"unnecke's inequality for a large subset (Theorem 1.7.2 in \cite{ruzsa_sumsets_and_structure}), which gives us control over the size of the subset which shows restricted growth in Pl\"unnecke's inequality, but at the cost of losing in the constant in the upper bound. We will state here a version with an improved constant due to Petridis \cite{petridis_highersums}. A more general version can be found in \cite{DifferentSumGyaMaRu}.


\begin{theorem}[Pl\"unnecke's inequality for a large subset] \label{thm:Plunnecke_for_large_hv2}
	Let $A$ be a finite non-empty subset of a commutative group such that $|A+A| \leq K|A|$ and let $0<\delta<1$ be a parameter. Then there exists $X \subseteq A$ of size  $|X| \geq (1-\delta)|A|$ such that for all \(h \geq 2\) $$ |X+hA| \leq \frac{K^h}{\delta^{h-1}}|A|. $$
\end{theorem}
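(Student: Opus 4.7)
The plan is to follow Petridis's approach \cite{petridis_highersums} to Plünnecke-type inequalities, which proves Plünnecke's inequality from the following key inequality: for finite subsets $S, B$ of an abelian group, if $X \subseteq S$ is a non-empty subset minimizing $|X + B|/|X|$ over non-empty subsets of $S$, with minimum value $K^*$, then $K^* \leq |S+B|/|S|$ and $|X + B + C| \leq K^* |X + C|$ for every finite set $C$. Iterating with $B = A$ and $C = jA$ for $j = 0, \ldots, h-1$ gives $|X + hA| \leq (K^*)^h |X|$ for this (possibly small) $X$. The goal is to strengthen the conclusion so that $X$ is forced to have size at least $(1-\delta)|A|$, at the price of weakening the bound by $1/\delta^{h-1}$.

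First, I would select the large subset $X \subseteq A$ by a maximality / greedy argument ensuring $|X| \geq (1-\delta)|A|$ while $|X + A|/|X| \leq K/\delta$. The idea is to iteratively remove from $A$ those elements whose inclusion would push the ratio above $K/\delta$. Each removed element $a$ contributes strictly more than $K/\delta$ new sums to the running $X + A$ at the moment of its removal; since the union of all these contributions lies inside $A+A$, which has size at most $K|A|$, the total number of removable elements is bounded by $\delta |A|$. Thus the final $X$ satisfies $|X| \geq (1-\delta)|A|$ together with $|X+A| \leq (K/\delta)|X|$.

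Next, I would iterate Petridis's key inequality on $X$. Apply Petridis within $X$ to find a non-empty minimizer $X^* \subseteq X$ with $K^{**} := |X^* + A|/|X^*| \leq |X + A|/|X| \leq K/\delta$. Iterating $h-1$ times yields $|X^* + hA| \leq (K^{**})^{h-1} \cdot |X^* + A|$; bounding the last factor by the crude estimate $|X^* + A| \leq |A + A| \leq K|A|$ produces $|X^* + hA| \leq (K/\delta)^{h-1} \cdot K|A| = K^h/\delta^{h-1} \cdot |A|$. The particular shape $K^h/\delta^{h-1}$ emerges because only $h-1$ of the $h$ additions of $A$ pay the refined factor $K/\delta$, while the first factor of $|X^* + A|$ is swallowed by the raw bound $K|A|$.

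The main obstacle is that this iteration bounds $|X^* + hA|$ for the Petridis minimizer $X^* \subseteq X$, not $|X + hA|$ for the large subset $X$ itself. Bridging this gap without degrading constants is the delicate technical step. I would try one of two routes: either (a) refine the construction of $X$ in the first stage so that the Petridis minimizer within $X$ actually coincides with $X$ (for instance by imposing a uniform expansion condition ruling out proper sub-minimizers), or (b) invoke a Ruzsa covering argument with the bound $|X^* + A| \leq (K/\delta)|X^*|$ to cover $X$ by few translates of $X^* - X^*$ and then transfer the iterated sumset bound from $X^*$ to $X$ at the cost of a controlled blow-up.
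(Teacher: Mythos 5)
This theorem is not proved in the paper at all: it is quoted as a known result (Ruzsa's Theorem 1.7.2 in \cite{ruzsa_sumsets_and_structure}, with the stated constant due to Petridis \cite{petridis_highersums}; see also \cite{DifferentSumGyaMaRu}), so the only question is whether your sketch amounts to a proof, and it does not. The obstacle you flag at the end is not a technical loose end but the entire content of the statement. Without the largeness requirement, Petridis's lemma applied to the minimizer \(X^*\subseteq A\) already gives \(|X^*+hA|\le K^{h-1}|X^*+A|\le K^h|A|\), i.e.\ the classical Pl\"unnecke--Ruzsa conclusion; so everything in your argument up to the last paragraph reproduces the known easy part, and the theorem is exactly the assertion that one can take \(|X|\ge(1-\delta)|A|\). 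Neither proposed bridge closes this. Route (b) cannot work with the stated constant: a Ruzsa covering of \(X\) by translates of \(X^*-X^*\) costs a factor of the covering number raised to powers of \(h\) (and forces difference sets into the iteration), so the bound degrades well beyond \(K^h/\delta^{h-1}\). Route (a) asks for a set of size at least \((1-\delta)|A|\) which is its own Petridis minimizer with ratio at most \(K/\delta\); no construction or existence argument is offered, and there is no reason such a set should exist in general --- the minimizer structure does not respect cardinality constraints of this kind.

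The first-stage greedy argument is also broken as stated. When an element is discarded, its ``more than \(K/\delta\) new sums'' are never committed to the running set \(X+A\), so the potential contributions of different discarded elements may overlap arbitrarily; the fact that they all lie in \(A+A\), of size at most \(K|A|\), therefore does not bound the number of discarded elements by \(\delta|A|\). (Even if it did, the conclusion of that stage --- \(|X|\ge(1-\delta)|A|\) with \(|X+A|\le(K/\delta)|X|\) --- is already satisfied trivially by \(X=A\), so it contributes nothing toward making the subsequent minimizer large.) The known proofs secure largeness by a different mechanism: one extracts minimizers \(X_1,X_2,\ldots\) successively from the shrinking remainders \(A\setminus(X_1\cup\cdots\cup X_{j-1})\), observing that as long as the remainder has size greater than \(\delta|A|\) its expansion ratio is at most \(K/\delta\), and takes \(X\) to be the accumulated union, each piece of which individually satisfies the iterated sumset bound; see \cite{petridis_highersums} and \cite{DifferentSumGyaMaRu} for the precise bookkeeping that yields \(K^h/\delta^{h-1}\) and a single \(X\) valid for all \(h\ge 2\). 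As it stands, your proposal identifies the right tools but leaves the one genuinely hard step unproved.
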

We will now prove the inverse result.
\begin{proof}[Proof of Theorem \ref{thm:inverse_theorem_2}]
    Suppose $ |A+A+A| \geq \frac{1}{M}K^{3/2}|A|^{3/2},$ where \(M\) is as stated in the theorem.

    By applying Pl\"unnecke's inequality for a large subset (with \(h=2\)) to $A$ we obtain $X \subset A$ such that $|X| \geq (1-\delta)|A|$ and  \begin{align*}
        |X+A+A| \leq \frac{K^2}{\delta}|X|,
    \end{align*}
    where \(\delta\) is a parameter we will chose later.

    We set $Y= A\setminus X$ and note that $A+A+A \subset (X+A+A) \cup (Y+Y+Y).$ So we have
    \begin{align}
        |A+A+A| &\leq |X+A+A| + |Y+Y+Y| \notag \\
        &\leq \frac{K^2}{\delta}|A| + |Y+Y+Y|  \label{eq:}
    \end{align}

    Now, by using the above inequality and the hypothesis that $ |A+A+A| \geq \frac{1}{M}K^{3/2}|A|^{3/2}$ we obtain \begin{align*}
        |Y+Y+Y| \geq \frac{1}{M} K^{3/2}|A|^{3/2} - \frac{K^2}{\delta}|A|.
    \end{align*}

    We now choose $\delta = 2M\sqrt{\frac{K}{|A|}}$ so that  \begin{align}
        |Y+Y+Y| \geq \frac{1}{2M}K^{3/2}|A|^{3/2} \label{eq:large_tripling_Y}.
    \end{align}
     Note that the hypotheses guarantee that $0< \delta < 1$ and moreover the choice of $\delta$ guarantees that $|Y+Y+Y| \geq \frac{1}{2}|A+A+A|.$
     Furthermore, $\displaystyle |Y| \leq \delta|A| = 2M\sqrt{K|A|}.$ Thus, inequality \eqref{eq:large_tripling_Y} tells us that that \begin{align*}
        |Y+Y+Y| \geq \frac{|Y|^3}{2^4M^4}.
    \end{align*}

    On the other hand, by using inequality \eqref{eq:large_tripling_Y} and the trivial bound $|Y+Y+Y| \leq |Y|^3$ we get $$ |Y| \geq \frac{\sqrt{K|A|}}{(2M)^{1/3}}.$$
\end{proof}

\section{More Extremal Examples}

Given Theorem \ref{thm:inverse_theorem_2}, one might ask if it is possible to say anything about the set $X = A \setminus Y,$ that appeared in the above proof and is analogous to the slow-growing subset (cube) in Ruzsa's examples (Example \ref{ex:ruzsa_construction}). In Ruzsa's construction, \(X\) is the cartesian product of an interval in \(\mathbb Z\). In Example \ref{ex:random} below, we replace \(X\) by a dense random subset of the cube, implying that $X$ need not have much structure. To make calculations easier we will work in $\left(\mathbb Z /n\mathbb Z\right)^3$ for a suitable $n.$

\begin{example}\label{ex:random}
	We let \(m \in \mathbb Z^+, p\gg \frac{1}{m}\) and \(K \in \mathbb R^+\) be parameters such that \(m/p \leq K\leq pm^3.\) By making \(m\) large enough we can find these parameters such that \(n = p^{1/2}K^{1/2}m^{3/2}\) is an integer and \(m\) divides \(n.\) Consider the group \(G = (\mathbb Z / n \mathbb Z)^3\) and let \(H\) be the subgroup of \(G\) isomorphic to \((\mathbb Z / m \mathbb Z)^3.\) We now define \(A\subset G\) as \(A:=X \cup Y\) where \(X\) is a randomly chosen subset of \(H\) such that \(\mathbb P (x \in X) = p\) for all \(x\in H\) independently, and \[Y = (\mathbb Z/n\mathbb Z\times \{0\} \times \{0\})\cup( \{0\} \times \mathbb Z/n\mathbb Z \times \{0\})\cup(\{0\} \times \{0\}\times \mathbb Z/n\mathbb Z).\]
	
	Note that \(|X| = \Theta(pm^3)\) with high probability and \(|Y| = \Theta(p^{1/2} K^{1/2}m^{3/2}).\) By the assumption that \(K\leq pm^3,\) with high probability \(|X|\) has bigger order than \(|Y|.\) So \(|A| = \Theta (pm^3)\) with high probability.
	
	Next we check that \(|A+A| = \Theta( K|A|)\) with high probability. First we have  \[|A+A| \leq |X+X| + |X+Y| + |Y+Y|.\] 
	\(|X+X| \leq |H+H| = m^3\) since \(X\) is a subset of the subgroup \(H\), and \(|Y+Y| = (3+o(1))n^2 = (3+o(1)) pKm^3.\) Note that \(pK\geq m\) by our assumption that \(m/p\leq K,\) which means that \(|Y+Y|\) is of bigger order than \(|X+X|.\)
	
	In order to calculate \(|X+ Y|\) let us first think about what happens when we add a single point in \(X\) to the spline along \(z\)-direction in \(Y\). Since each spline has length \(n\), we obtain a line parallel to it and of length \(n\). This tells us that the size of the sum of \(X\) with the \(z\)-spline is \(|\pi_{xy}(X)|\times n\), where \(\pi_{xy}(X)\) is the projection of \(X\) onto the \(xy\)-plane. A similar calculation holds for the sum of \(X\) with the \(x\) and \(y\) splines respectively. Recall that \(|\pi_{xy}(X)|= \Theta(m^2)\)
	with high probabilty. Indeed, for any \((x,y) \in [m]^2\) we have \[\mathbb P ((x, y) \not \in \pi_{xy}(X)) = (1-p)^m.\]
	So by a union bound
	\[\mathbb P ((x, y) \not \in \pi_{xy}(X) \text{for some \((x,y) \in [m]^2\)}) \leq m^2 (1-p)^m \leq m^2 e^{-pm}.\]
	
	So long as \(p \gg 1/m \) (which we did assume initially) this probability is exponentially small. Thus \(|X+ Y|= O(m^2n) = O(m^3p^{1/2}K^{1/2}m^{1/2}) \leq O(pKm^3)\) with high probability, where in the last step we have used once again the assumption that \(m\leq Kp\).
	
	Putting all this together we have with high probability 
	\[ |A+ A| \leq |X+ X| + |X+ Y| + |Y + Y|= O(pKm^3). \]
	
	Since \(|A+ A|\geq |Y + Y|= \Omega(pKm^3)\), it follows that \(|A+ A| = \Theta(pKm^3) = \Theta(K|A|)\) with high probability. 
	
	Finally we have \(|A+ A+ A|\geq |Y + Y + Y|= n^3.\) So \(|A+ A+ A|= \Theta (K^{3/2}|A|^{3/2})\) with
	high probability.
\end{example}

The sets \(X\) in Examples \ref{ex:ruzsa_construction} and \ref{ex:random} are different, yet both have the property that \(|X+X+X| = O(|X+X|).\) It is natural to ask if such a property is common in all nearly extremal examples of Ruzsa's inequality. The following example gives a negative answer. 

\begin{example} \label{ex:gap}
	In Ruzsa's examples (Example \ref{ex:ruzsa_construction}), the set $X$ (a cube) was the cartesian product of the interval $[0, m)$ in \(\mathbb Z.\) We will replace this with the cartesian product of a generalized arithmetic progression as follows.
	
	
	Fix \(k , d \in \mathbb Z^+\) and let \(P\) denote the set of integers in base \(3k\) with digits less than $k.$ More formally,
	\[ P:= \left\{ x_0+x_1(3k) + x_2(3k)^2 + \cdots + x_{k-1}(3k)^{d-1} \ : \ x_i\in [0, k) \text{ for each } 0\leq i \leq d-1 \right\}. \]
	
	Note that $|P| = k^d$ (so this is a proper generalized arithmetic progression of dimension $d$) and $P\subset[m].$ It is easy to see that $|P+P|$ is the set of integers in base \(3k\) with digits less than $2k-1$ and so $|P+P| = (2k-1)^d.$ Similarly, \(|P+P+P|\) is the set of integers in base \(3k\) with digits less than $3k-2$ and $|P+P+P| = (3k-2)^d.$ 
	
	Let us define $X = P\times P\times P.$ Then $|X| = k^{3d}$ and since sumsets behave nicely with cartesian products, we have \( |X+X|=  (2k-1)^{3d} \) and \( |X+X+X|=  (3k-2)^{3d}. \)
	
	We then define $Y$ to be the set of splines with modified lengths as follows. Let $K$ be such that $k^d\leq K\leq k^{3d}$ and
	
	\[ Y = \left\{ (x, 0, 0), (0, x, 0), (0, 0, x) \in \mathbb Z^3 \ | \ 0\leq x < K^{\frac 1 2}k^{\frac{3d}{2}} \right\}. \] 
	
	We take \(A = X \cup Y\) and note that $|A| = \Theta(k^{3d}),$ since $|X|$ dominates over $|Y|$ because of the assumption that $K\leq k^{3d}.$
	
	Next we check that $|A+A| = \Theta(K|A|).$ In the below calculation, in order to find the size of the sum of a spline with $X,$ we note that this sumset is covered by a``parallelepiped" which extends $X$ along the spline.
	\begin{align*}
		|A+A| &\leq |X+X| + |X+Y| + |Y+Y| \\
		&= |X+X| + O(|P|^2|S|) + |Y+Y| \\
		&= (2k-1)^{3d} + O(k^{2d}\cdot K^{1/2}k^{3d/2}) + O(K k^{3d}) \\
		&= O(k^{3d}) + O(k^{3d}\cdot \underbrace{K^{1/2}k^{d/2}}_{k^d \leq K}) + O(K k^{3d}) \\
		&= O(K k^{3d}) = O(K|A|).
	\end{align*}
	
	Since \(|A+A| \geq |Y+Y| = \Omega(K |A|),\) we also have $|A+A| = \Theta(K|A|).$
	
	As for the triple sums, we have 
	\begin{equation*}
		|A + A + A| \geq |Y+Y+Y| = \Omega( K^{3/2}|A|^{3/2}).
	\end{equation*}
	
	So the set $A$ serves as an asymptotically sharp example for Ruzsa's inequality.
	
	The important thing to note here about the set $X$ is that \(\frac{|X+X|}{|X|} = (1+o(1))2^{3d}\) and $\frac{|X+X+X|}{|X+X|} = (1+o(1))\left(\frac{3}{2}\right)^{3d}.$ So by making the dimension \(d\) larger we can make these ratios as large as we want. 
	
\end{example}

\section{Higher sumsets} \label{sec:general_inverse result}
We will now state a generalization of the inverse result to higher sumsets. Given an integer \(h \geq 2,\) define \(\alpha=\frac{|hA|}{|A|}.\)  Recall the version of Ruzsa's inequality for higher sumsets in Equation \eqref{eq:Ruzsa_hvs(h-1)} \[|(h+1)A| \leq |hA|^{\frac{h+1}{h}} \leq \alpha^{\frac{h+1}{h}}|A|^{\frac{h+1}{h}}.\]

The following example is an extension of Ruzsa's construction (Example \ref{ex:ruzsa_construction}) for higher sumsets and shows that the above inequality is sharp up to constant factors (fixing \(h\)). Once again, to make calculations simpler, we will work in a group \(G = (\mathbb Z / n \mathbb Z)^{h+1}\) for a suitable \(n.\)
\begin{example} \label{ex:ruzsa_higher_sums}
	Fix an integer \(h\geq 2\)	and let \(m\in \mathbb Z^+\) and  \(\alpha \in \mathbb R^+\) be parameters such that \(m^{h-1} \leq \alpha \leq m^{h^2-1}.\) If \(m\) is large enough then we can find these parameters such that \(n = (\alpha m^{h+1})^{\frac{1}{h}}\) is an integer and \(m\) divides \(n.\) Let \(G\) be the group \(G = (\mathbb Z / n \mathbb Z)^{h+1}\) and let \(X\) be its subgroup isomorphic to \((\mathbb Z / m \mathbb Z)^{h+1}.\) For each \(1\leq i \leq h+1\) let \(Y_i\) denote the copy of \(\mathbb Z/n\mathbb Z\) inside \(G\) along the \(i\)-th coordinate direction and let \(Y = \cup_{i=1}^{h+1} Y_i\). We then define the set \(A\subset G\) as \(A:=X \cup Y.\)
		
		Note that $|A| = \Theta_h(m^{h+1})$ since $|X|$ has bigger order than $|Y|,$ which follows from the assumption that $\alpha \leq m^{h^2-1}.$
		
		Next we check that $|hA| = \Theta_h(\alpha|A|).$ In the calculations below, first we are using the fact that \(X\) is a subgroup, so \(lX = X\) for all \(l\geq 2.\) Then in equation \eqref{eq:sum X+(h-1)Y} below, in order to find the size of the sum of \((h-1)Y\) with $X$  we note that \((h-1)Y\) is a union of (so to speak) co-dimension 2 hyperplanes in \((\mathbb Z / n\mathbb Z)^{h+1}.\) So the sumset of \(X\) along one of these hyperplanes is covered by the union of translates of the cube \(X\) along the hyperplane. 
		\begin{align}
			|hA| &\leq |X + (h-1)A| + |hY| \notag \\
			&= O_h\left(\sum_{l=0}^{h-1}|X+lY|\right) + O_h(n^h)  \notag\\
			&= O_h\left(|X+(h-1)Y|\right) + O_h(\alpha m^{h+1})  \notag \\
			&= O_h(m^{2}\cdot \alpha^{\frac{h-1}{h}}m^{\frac{h^2-1}{h}}) + O_h(\alpha m^{h+1}) \label{eq:sum X+(h-1)Y} \\
			&= O_h(\alpha m^{h+1} \underbrace{\alpha^{-\frac{1}{h}}m^{\frac{h-1}{h}}}_{\leq 1, \text{ since } m^{h-1} \leq \alpha})+ O_h(\alpha m^{h+1})  \notag \\
			&= O_h(\alpha m^{h+1}) = O_h(\alpha|A|). \notag
		\end{align}
		
		Clearly we also have $|hA| \geq |hY| \geq \alpha m^{h+1} = \Omega_h(\alpha|A|).$ Finally we note \begin{equation*}
			|(h+1)A| \geq |(h+1)Y| = n^{h+1} = (\alpha |A|)^{\frac{h+1}{h}}.
		\end{equation*}
		This shows that Ruzsa's inequality \eqref{eq:Ruzsa_hvs(h-1)} is sharp up to a constant factor (fixing \(h\)), for \(\alpha\) in the range $|A|^{1-\frac{2}{h+1}} \leq \alpha \leq |A|^{h-1}.$

\end{example}

The following is an inverse result to Equation \eqref{eq:Ruzsa_hvs(h-1)}.
\begin{theorem}[Inverse result for \(h \geq 2\)] \label{thm:general_inverse_theorem}
	Let $A$ be a finite non-empty subset of a commutative group and \(h\geq 2\) be a fixed integer. Suppose $|hA| \leq \alpha|A|$ and that $ |(h+1)A| \geq \frac{1}{M}\alpha^{\frac{h+1}{h}}|A|^{\frac{h+1}{h}},$ where $1\leq  M \leq \frac{1}{2h}\left(\frac{|A|}{\alpha^{\frac{1}{h-1}}}\right)^{\frac{1}{h}}.$ Then there exists $ Y \subset A$ such that \begin{enumerate}
		\item $ \displaystyle \frac{(\alpha|A|)^{\frac{1}{h}}}{(2M)^{\frac{1}{h}}} \leq |Y| < (2Mh)^{(h-1)}(\alpha|A|)^{\frac{1}{h}}$ and 
		\item $ \displaystyle |(h+1)Y| \geq \frac{|Y|^{h+1}}{(2M)^{h^2}h^{h^2-1}}.$
	\end{enumerate}
\end{theorem}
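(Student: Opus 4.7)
My plan is to adapt the strategy used for Theorem \ref{thm:inverse_theorem_2} ($h = 2$) to general $h$. After translating $A$ so that $0 \in A$ (which preserves all sumset cardinalities and supplies the useful inclusion $hA \subset (2h-2)A$), the first step applies a Plünnecke-type inequality for a large subset to the hypothesis $|hA| \leq \alpha|A|$. Since $|A + (h-1)A| = |hA| \leq \alpha|A|$, a version of Plünnecke-Ruzsa for large subsets in the spirit of Theorem \ref{thm:Plunnecke_for_large_hv2} applied with $B = (h-1)A$ will produce $X \subset A$ with $|X| \geq (1-\delta)|A|$ and a polynomial upper bound on $|X + hA|$ of the form $\alpha^{c(h)}|A|/\delta^{h-1}$, where the exponent $c(h)$ is controlled by the choice of $n$ and by the inclusion $hA \subset (2h-2)A$.

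The second step is a direct analogue of the $h = 2$ argument. Setting $Y := A \setminus X$, the observation that every $(h+1)$-fold sum from $A$ either uses at least one element of $X$ (placing it in $X + hA$) or consists entirely of elements of $Y$ (placing it in $(h+1)Y$) yields
\begin{equation*}
  (h+1)A \;\subset\; (X + hA) \cup ((h+1)Y),
\end{equation*}
so $|(h+1)A| \leq |X + hA| + |(h+1)Y|$. Combining this with the hypothesis $|(h+1)A| \geq \alpha^{(h+1)/h}|A|^{(h+1)/h}/M$ and the Plünnecke bound produces a lower bound on $|(h+1)Y|$ of the form $\alpha^{(h+1)/h}|A|^{(h+1)/h}/M - \alpha^{c(h)}|A|/\delta^{h-1}$.

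Next I would calibrate $\delta$ to be just large enough that the Plünnecke contribution is absorbed into at most half of the first term; this pins down $\delta$ at the appropriate threshold, and the bound $|Y| \leq \delta|A|$ then translates into the claimed upper bound $|Y| < (2Mh)^{h-1}(\alpha|A|)^{1/h}$. The assumption $M \leq \frac{1}{2h}(|A|/\alpha^{1/(h-1)})^{1/h}$ is precisely what keeps $\delta < 1$ so that the Plünnecke step remains valid. Combining the resulting lower bound $|(h+1)Y| \geq \alpha^{(h+1)/h}|A|^{(h+1)/h}/(2M)$ with the upper bound on $|Y|$ yields the quantitative estimate $|(h+1)Y| \geq |Y|^{h+1}/((2M)^{h^2}h^{h^2-1})$, while pairing the same lower bound with the trivial $|(h+1)Y| \leq |Y|^{h+1}$ yields the companion lower bound $|Y| \geq (\alpha|A|)^{1/h}/(2M)^{1/h}$.

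The main obstacle I anticipate is the Plünnecke step. For $h = 2$ the identity $|A+A| = |hA|$ makes Theorem \ref{thm:Plunnecke_for_large_hv2} immediately applicable with $K = \alpha$, cleanly giving $|X + 2A| \leq \alpha^2|A|/\delta$ with the correct exponents. For $h \geq 3$ the hypothesis controls only $|hA|$, and a single application of Plünnecke with $B = (h-1)A$ and $n = 2$ seems to give only $|X + hA| \leq \alpha^2|A|/\delta$, producing $|Y|$ that scales like $(\alpha|A|)^{(h-1)/h}$ rather than the sharper $(\alpha|A|)^{1/h}$ claimed. The $(2Mh)^{h-1}$ factor in the bound on $|Y|$ and the exponent $h^{h^2-1}$ in the bound on $|(h+1)Y|$ strongly suggest that the proof iterates the large-subset argument $h - 1$ times (each iteration refining the subset and contributing a multiplicative factor of roughly $2Mh$), or alternatively combines Plünnecke with Ruzsa's triangle inequality on a Petridis-style optimal subset to lower the effective exponent of $\alpha$ in the bound on $|X + hA|$; identifying this refined bound is the technical crux.
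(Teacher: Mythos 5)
Your overall decomposition is exactly the paper's: set $Y = A \setminus X$, use $(h+1)A \subset (X+hA) \cup (h+1)Y$, calibrate $\delta$ so the Plünnecke term is half the target, and read off both bounds. The gap is the Plünnecke step, which you correctly flag as the crux but do not resolve. The required ingredient is not an iterated or improvised application of Theorem~\ref{thm:Plunnecke_for_large_hv2}. It is a single application of the Gyarmati--Matolcsi--Ruzsa version of Plünnecke's inequality for a large subset (Theorem~\ref{thm:Plunnecke_large_general} in the paper), which takes the hypothesis $|hA| \leq \alpha|A|$ directly and produces $X \subseteq A$ with $|X| \geq (1-\delta)|A|$ and
\[ |X + hA| \;\leq\; \frac{h\,\alpha^{\frac{h}{h-1}}}{\delta^{\frac{1}{h-1}}}\,|A|. \]
Note that the exponents here differ from your guess in two respects: the power of $\alpha$ is $\frac{h}{h-1}$ (not $2$), and the power of $\delta$ is $\frac{1}{h-1}$ (not $h-1$). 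Both are essential. Setting $\frac{h\alpha^{h/(h-1)}}{\delta^{1/(h-1)}}|A| = \frac{1}{2M}\alpha^{\frac{h+1}{h}}|A|^{\frac{h+1}{h}}$ yields $\delta = (2Mh)^{h-1}\alpha^{1/h}|A|^{-(h-1)/h}$, which is where $(2Mh)^{h-1}$ in the upper bound on $|Y|$ comes from and why the hypothesis $M \leq \frac{1}{2h}(|A|/\alpha^{1/(h-1)})^{1/h}$ is exactly the condition $\delta < 1$. Then $|Y| \leq \delta|A| = (2Mh)^{h-1}(\alpha|A|)^{1/h}$, and feeding this into $|(h+1)Y| \geq \frac{1}{2M}(\alpha|A|)^{(h+1)/h}$ gives $|(h+1)Y| \geq |Y|^{h+1}/((2M)^{h^2}h^{h^2-1})$ directly; the $h^{h^2-1}$ is $(h^{h-1})^{h+1}$, coming from raising $(2Mh)^{h-1}$ to the $(h+1)$-st power, not from iterating the large-subset argument. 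So your alternative hypothesis (iterating $h-1$ times) does not match the paper: it is one application of a stronger lemma, and the $h-1$ in the exponent comes from the $\delta^{1/(h-1)}$ inside that lemma. Your second guess -- that one combines Plünnecke with a Petridis-style subset argument -- is closer to how Theorem~\ref{thm:Plunnecke_large_general} itself is proved in Gyarmati--Matolcsi--Ruzsa, but that is the proof of the tool, not the structure of this theorem's proof.
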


The proof of Theorem \ref{thm:general_inverse_theorem} goes exactly the same way as that of Theorem \ref{thm:inverse_theorem_2} (\(h=2\) case) and uses following version of Pl\"unnecke's inequality for a large subset from Gyarmati-Matolcsi-Ruzsa \cite{DifferentSumGyaMaRu}. 

\begin{theorem}[Pl\"unnecke's inequality for a large subset \cite{DifferentSumGyaMaRu}] \label{thm:Plunnecke_large_general}
	Let $A$ be a finite non-empty subset of a commutative group and \(h\geq 2\) be an integer such that $|hA| \leq \alpha|A|.$ Let $0<\delta<1$ be a parameter. Then there exists $X \subseteq A$ of size  $|X| \geq (1-\delta)|A|$ such that $$ |X+hA| \leq \frac{h\alpha^{\frac{h}{h-1}}}{\delta^{\frac{1}{h-1}}}|A|. $$
\end{theorem}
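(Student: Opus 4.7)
The plan is to adapt Petridis's iterative ``pick-a-nucleus-and-remove'' proof of the large-subset Plünnecke inequality from the setting $|A+B|\leq K|A|$ to our setting $|hA|\leq\alpha|A|$, following Gyarmati--Matolcsi--Ruzsa. First I would establish a per-nucleus lemma: for every nonempty $A'\subseteq A$ there exists a nonempty $X_0\subseteq A'$ with $|X_0+hA|\leq\bigl(|A'+(h-1)A|/|A'|\bigr)^{h/(h-1)}|X_0|$. Petridis's refinement of Plünnecke--Ruzsa applied to the pair $(A',(h-1)A)$ gives a nonempty $X_0\subseteq A'$ satisfying $|X_0+(h-1)A+C|\leq(|A'+(h-1)A|/|A'|)\,|X_0+C|$ for every finite $C$; iterating along the Plünnecke graph from $A'$ to $hA$ and distributing the overall magnification across the $h-1$ single-element addition layers (via a min-cut / Menger-type argument on the layered graph) yields the $h/(h-1)$-th power.

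Given this per-nucleus step, the iteration is standard. Initialize $A_0=A$ and at step $i$ apply the nucleus lemma to $A_i$, obtaining a nonempty $X_i\subseteq A_i$ with
\[
|X_i+hA|\;\leq\;\Bigl(\tfrac{|A_i+(h-1)A|}{|A_i|}\Bigr)^{h/(h-1)}|X_i|\;\leq\;\Bigl(\tfrac{\alpha|A|}{|A_i|}\Bigr)^{h/(h-1)}|X_i|,
\]
where we use $A_i\subseteq A$ to get $|A_i+(h-1)A|\leq|hA|\leq\alpha|A|$. Set $A_{i+1}=A_i\setminus X_i$ and halt at the first $i$ with $|A_{i+1}|<\delta|A|$. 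Taking $X:=\bigcup_iX_i\subseteq A$ then gives $|X|\geq(1-\delta)|A|$.

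Summing the per-stage bounds,
\[
|X+hA|\;\leq\;\sum_i|X_i+hA|\;\leq\;(\alpha|A|)^{h/(h-1)}\sum_i\frac{|A_i|-|A_{i+1}|}{|A_i|^{h/(h-1)}},
\]
where the telescoping sum on the right is a Riemann approximation to $\int_{\delta|A|}^{|A|}t^{-h/(h-1)}\,dt=(h-1)\bigl((\delta|A|)^{-1/(h-1)}-|A|^{-1/(h-1)}\bigr)$. Substituting back and simplifying the powers of $|A|$ (the exponent $h/(h-1)-1/(h-1)=1$) gives $|X+hA|\leq h\,\alpha^{h/(h-1)}\delta^{-1/(h-1)}|A|$, with the leading constant $h$ absorbing the $h-1$ from the integral and any Riemann-sum discretization error.

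The main obstacle is establishing the per-nucleus bound with the sharp exponent $h/(h-1)$. A direct Petridis application to $(A,A)$ with $K=|A+A|/|A|\leq\alpha$ yields only $|X_0+hA|\leq\alpha^h|X_0|$, which is too weak for $h\geq 3$. Applying Petridis instead with $B=(h-1)A$ controls $|X_0+k(h-1)A|$ for integer $k$ but not $|X_0+hA|$ directly, since $hA\not\subseteq k(h-1)A$ in general. The refinement to $\alpha^{h/(h-1)}$ requires exploiting the full layered structure of the Plünnecke graph from $A$ to $hA$, so that the overall magnification $\alpha$ is distributed across $h-1$ single-element addition steps and a carefully selected nucleus achieves the geometric-mean per-step magnification $\alpha^{1/(h-1)}$; this is the technical heart of the argument.
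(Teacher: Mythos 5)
First, note that the paper itself does not prove this statement: it is quoted from Gyarmati--Matolcsi--Ruzsa, so there is no internal proof to compare against, and your proposal has to stand on its own. Your outer iteration is the standard (GMR-style) argument and is correct: removing the nucleus $X_i$ from $A_i$, halting at the first step with $|A_{i+1}|<\delta|A|$, using $|A_i+(h-1)A|\le|hA|\le\alpha|A|$, and comparing $\sum_i(|A_i|-|A_{i+1}|)\,|A_i|^{-h/(h-1)}$ with $\int_{\delta|A|}^{|A|}t^{-h/(h-1)}\,dt$ (plus one extra term at most $(\delta|A|)^{-1/(h-1)}$ for the final step, whose lower endpoint may fall below $\delta|A|$ --- worth saying explicitly, since the naive integral comparison fails there) gives exactly $h(\delta|A|)^{-1/(h-1)}$ and hence the claimed bound $h\alpha^{h/(h-1)}\delta^{-1/(h-1)}|A|$, with $|X|\ge(1-\delta)|A|$ immediate from the stopping rule.

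The genuine gap is the per-nucleus lemma, and the route you sketch for it does not work as described. Applying Petridis's lemma to the pair $(A',(h-1)A)$ and then taking $C=A$ only yields $|X_0+hA|\le\beta\,|X_0+A|$ with $\beta=|A'+(h-1)A|/|A'|$, and you have no control on $|X_0+A|$; repeated use of that single inequality cannot produce the exponent $h/(h-1)$. What you actually need is precisely Pl\"unnecke's graph-theoretic inequality for the addition graph with base $A'$ and summand $A$: the magnification ratios satisfy $D_h\le D_{h-1}^{h/(h-1)}$, and since $D_{h-1}\le\beta$ this yields a nonempty $X_0\subseteq A'$ with $|X_0+hA|\le\beta^{h/(h-1)}|X_0|$, which is your lemma verbatim. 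That statement is a standard, citable theorem (Pl\"unnecke/Ruzsa, proved via Menger's theorem; Petridis's methods also recover the graph version, but not by the two-line application you give), and it is exactly what Gyarmati--Matolcsi--Ruzsa feed into the same removal-and-summation iteration. So the architecture of your proof matches the source and the bookkeeping is right, but the step you yourself flag as the technical heart is only gestured at: either cite the consecutive-magnification-ratio form of Pl\"unnecke's inequality explicitly or prove it; the intermediate invocation of Petridis's lemma for $(A',(h-1)A)$ is a red herring.
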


\begin{remark}
	In Theorem \ref{thm:general_inverse_theorem}, the dependence of the constants on \(M\) is still polynomial in \(M,\) taking \(h\) to be fixed. However, the dependence on \(h\) is likely not optimal. If one wishes to involve the doubling ratio \(K = \frac{|A+A|}{|A|}\) instead of \(\alpha = \frac{|hA|}{|A}\) then we can iteratively apply Equation \eqref{eq:Ruzsa_hvs(h-1)} to obtain the estimate \[|(h+1)A| \leq K^{\frac{h+1}{2}}|A|^{\frac{h+1}{2}}.\]
	
	In this case, one could then use the same method along with Theorem \ref{thm:Plunnecke_for_large_hv2} to obtain the following result, which has a better dependence on \(h.\)
\end{remark}

\begin{theorem} \label{thm:inverse_theorem_hv2}
	Let $A$ be a finite non-empty subset of a commutative group with $|A+A| \leq K|A|.$ Suppose $ |(h+1)A| \geq \frac{1}{M}K^{\frac{h+1}{2}}|A|^{\frac{h+1}{2}},$ where $1\leq  M \leq \frac{1}{2}\left(\frac{|A|}{K}\right)^{\frac{h-1}{2}}.$ Then there exists $ Y \subset A$ such that \begin{enumerate}
		\item $ \displaystyle \frac{\sqrt{K|A|}}{(2M)^{\frac{1}{h+1}}} \leq |Y| < (2M)^{\frac{1}{h-1}}{\sqrt{K|A|}}$ and 
		\item $ \displaystyle |(h+1)Y| \geq \frac{|Y|^{h+1}}{(2M)^4}.$
	\end{enumerate}
\end{theorem}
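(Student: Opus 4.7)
My plan is to mirror the proof of Theorem \ref{thm:inverse_theorem_2}, now using the general-$h$ form of Plünnecke's inequality for a large subset (Theorem \ref{thm:Plunnecke_for_large_hv2}). First, I would apply that theorem to obtain $X \subseteq A$ with $|X| \geq (1-\delta)|A|$ and $|X + hA| \leq \frac{K^h}{\delta^{h-1}}|A|$ for a parameter $\delta \in (0,1)$ to be chosen. Setting $Y = A \setminus X$, the elementary containment
\[ (h+1)A \subseteq (X + hA) \cup ((h+1)Y), \]
which holds because each $(h+1)$-fold sum from $A$ either has at least one summand in $X$ or has all summands in $Y$, yields $|(h+1)Y| \geq |(h+1)A| - |X+hA|$. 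Combined with the hypothesis on $|(h+1)A|$ this gives a lower bound on $|(h+1)Y|$ as a function of $\delta$.

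The second step is to optimize in $\delta$. The natural choice is $\delta = (2M)^{1/(h-1)}\sqrt{K/|A|}$, which makes the Plünnecke error term at most half of the assumed lower bound on $|(h+1)A|$. With this choice, the hypothesis $M \leq \tfrac{1}{2}(|A|/K)^{(h-1)/2}$ is precisely what guarantees $\delta < 1$, and we obtain
\[ |(h+1)Y| \geq \frac{1}{2M}K^{(h+1)/2}|A|^{(h+1)/2} \qquad\text{and}\qquad |Y| \leq \delta|A| = (2M)^{1/(h-1)}\sqrt{K|A|}. \]
The matching lower bound $|Y| \geq \sqrt{K|A|}/(2M)^{1/(h+1)}$ then follows from the trivial inequality $|(h+1)Y| \leq |Y|^{h+1}$.

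Finally, the tripling-type conclusion follows by dividing the lower bound on $|(h+1)Y|$ by the upper bound on $|Y|^{h+1}$:
\[ \frac{|(h+1)Y|}{|Y|^{h+1}} \geq \frac{(2M)^{-1}}{(2M)^{(h+1)/(h-1)}} = (2M)^{-2h/(h-1)}. \]
Since $2h/(h-1) = 2 + 2/(h-1)$ is decreasing in $h$ and attains its maximum value $4$ at $h=2$, we have $(2M)^{-2h/(h-1)} \geq (2M)^{-4}$, giving the stated uniform bound. I do not anticipate a serious obstacle: the argument is a mechanical extension of the $h=2$ case, and the only points requiring care are the verification that the chosen $\delta$ lies in $(0,1)$ under the stated range of $M$ and the elementary monotonicity check on the exponent $2h/(h-1)$. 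Using $(2M)^4$ in the conclusion instead of the sharper $(2M)^{2h/(h-1)}$ is just for the cosmetic convenience of a uniform constant.
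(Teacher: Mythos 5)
Your argument is correct and is precisely the one the paper has in mind: the paper explicitly leaves Theorem \ref{thm:inverse_theorem_hv2} as an application of ``the same method'' from the $h=2$ case together with Theorem \ref{thm:Plunnecke_for_large_hv2}, and your choice $\delta = (2M)^{1/(h-1)}\sqrt{K/|A|}$, the resulting bounds on $|Y|$ and $|(h+1)Y|$, and the observation that $2h/(h-1)\le 4$ for $h\ge 2$ (so $(2M)^{-2h/(h-1)}\geq (2M)^{-4}$ since $2M\geq 2$) all reproduce exactly the intended calculation.
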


We shall also note that much of what was said in Section \ref{sec:perspectives} of this paper can be generalized to higher sumsets as well. For example, using essentially the same argument as in Theorem \ref{thm:macaulay_sumsets} and a general version of the Clements-Lindstr\"om Theorem (see \cite{greene-kleitman}) we can prove the following.

\begin{theorem}[Eliahou-Mazumdar \cite{eliahou_mazumdar}] \label{thm:higher_macaulay_sumsets}
	Let \( A \) be a non-empty finite subset of a commutative group. If \(|hA| = \binom{x+h-1}{h}\) for some \(x\geq 1\), then \[|(h+1)A| \leq \binom{x+h}{h+1}.\] This is best possible if \(x\) is an integer and in this case, equality holds if and only if \(A\) is \((h+1)\)-dissociated (cf. Example \ref{ex:dissociated_set}).
\end{theorem}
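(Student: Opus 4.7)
The plan is to follow the proof of Theorem \ref{thm:macaulay_sumsets} essentially verbatim, replacing $A^{(3)}$ and $A^{(2)}$ by $A^{(h+1)}$ and $A^{(h)}$, and invoking the general form of the Clements--Lindstr\"om theorem on $X^{(h+1)}$ (from \cite{greene-kleitman}) in place of the $X^{(3)}$ version stated in Proposition \ref{prop:clements_lindstrom}.

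First I would fix a total order on $A$ with smallest element $a_1$, let $S \subset A^{(h+1)}$ be the lexicographically minimal representatives of sums in $(h+1)A$, and let $C \subset A^{(h)}$ be the lex-minimal representatives of sums in $hA$, so that $|S| = |(h+1)A|$ and $|C| = |hA| = \binom{x+h-1}{h}$. The heart of the argument is the equality $\partial S = C$. For $\partial S \subseteq C$, the direct analogue of Lemma \ref{lem:projection_observation} works: given $P \in S$ and $a \in P$, if $R = P \setminus \{a\}$ were not lex-minimal in $A^{(h)}$, then for any lex-smaller $R'$ of the same sum, $R' \cup \{a\}$ would be lex-smaller than $P$ with the same sum (adjoining $\{a\}$ to both $R$ and $R'$ changes their multiplicity profiles identically, so the smallest coordinate where they differ is unchanged), contradicting $P \in S$. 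For the reverse $C \subseteq \partial S$, I would verify that for every $R \in C$ the multiset $\{a_1\} \cup R$ lies in $S$: if some $T \in A^{(h+1)}$ with the same sum were lex-smaller, the minimality of $a_1$ in $A$ forces $a_1 \in T$ (otherwise $a_1$ would be the smallest element of the symmetric difference and would lie in $\{a_1\} \cup R$, contradicting $T <_{\mathrm{lex}} \{a_1\} \cup R$), and then $T \setminus \{a_1\}$ would lex-beat $R$ in $A^{(h)}$, contradicting $R \in C$.

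With $\partial S = C$ established, the general Clements--Lindstr\"om theorem applied with $k = h+1$ and $|\partial S| = \binom{x+h-1}{h}$ immediately yields $|(h+1)A| = |S| \leq \binom{x+h}{h+1}$. For the extremal case, when $x$ is an integer and $A$ is $(h+1)$-dissociated with $|A| = x$, counting multisets gives $|hA| = \binom{x+h-1}{h}$ and $|(h+1)A| = \binom{x+h}{h+1}$; conversely, if equality holds, the extremal characterization in Clements--Lindstr\"om forces $S = T^{(h+1)}$ for some $x$-element set $T \subseteq A$, from which one deduces $A = T$ and that $A$ is $(h+1)$-dissociated by the same reasoning as in the $h=2$ case. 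I expect the only mild obstacle to be careful bookkeeping of multiset lex-order in the $C \subseteq \partial S$ step, but since the $h=2$ proof already works with multisets in $A^{(3)}$ rather than with sets, the multiset manipulations transfer with purely notational change.
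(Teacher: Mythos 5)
Your proposal is correct and matches the route the paper itself prescribes: the paper does not spell out a separate proof of Theorem \ref{thm:higher_macaulay_sumsets}, but simply remarks that it follows ``using essentially the same argument as in Theorem \ref{thm:macaulay_sumsets}'' together with the general Clements--Lindstr\"om theorem from \cite{greene-kleitman}, which is exactly what you carry out by replacing $A^{(3)}, A^{(2)}$ with $A^{(h+1)}, A^{(h)}$ and verifying $\partial S = C$ in the multiset lex order. Your bookkeeping in both inclusions ($\partial S \subseteq C$ via adjoining the removed element, and $C \subseteq \partial S$ via forcing $a_1 \in T$ for any lex-smaller $T$) is sound, and the extremal-case deduction of $A = T$ and $(h+1)$-dissociativity transfers as you indicate.
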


One can also prove a stability result for \(h\geq 3\) similarly to Theorem \ref{thm:structural_stability_ruzsa} using the stability version of Kruskal-Katona theorem due to Keevash \cite{keevash_kk}, showing that if the size of \((h+1)A\) is close to the upper bound in Theorem \ref{thm:higher_macaulay_sumsets} then the set \(A\) contains an (almost) dissociated subset \(Y\) such that \((h+1)Y\) contains most of \((h+1)A.\)

\bibliographystyle{amsplain}
\bibliography{citations}

\end{document}